\newtheorem{theorem}{Theorem}[section]
\newtheorem{lemma}[theorem]{Lemma}
\newtheorem{proposition}[theorem]{Proposition}
\theoremstyle{definition}
\newtheorem{definition}[theorem]{Definition}
\theoremstyle{remark}
\newtheorem{remark}[theorem]{Remark}
\numberwithin{equation}{section}
\newcommand{\ds}{\displaystyle}
\newcommand{\R}{\mathbb R}
\newcommand{\N}{\mathbb N}
\newcommand{\Z}{\mathbb Z}
\newcommand{\norm}[1]{\left\Vert#1\right\Vert}
\newcommand{\set}[2]{\left\{ #1 \,;\, #2 \right\}}
\begin{document}
\title{Completeness of sums of subspaces of bounded functions and applications}
\author{Jo${\rm \ddot e}$l BLOT{}$^1$ and
Philippe CIEUTAT{}$^2$}
%
%
\setcounter{footnote}{-1}
\renewcommand{\thefootnote}{\alph{footnote}}
\footnote{ {}$^1$ Laboratoire SAMM EA4543, Paris 1 Panth\'{e}on-Sorbonne, centre P.M.F., 90 rue de Tolbiac, 75634 Paris cedex 13,
France. E-mail address: blot@univ-paris1.fr}
\setcounter{footnote}{-1}
\renewcommand{\thefootnote}{\alph{footnote}}
\footnote{ {}$^2$ Laboratoire de Math\'ematiques de Versailles, Universit\'e Versailles-Saint-Quentin-en-Yvelines, 45 avenue des \'Etats-Unis, 78035 Versailles cedex,
France. E-mail address: philippe.cieutat@uvsq.fr}

\begin{abstract}
We give a new proof of  a characterization of the closeness of the range of a continuous linear  operator and of the closeness of the sum of two closed vector subspaces of a Banach space. Then we state  sufficient conditions for  the closeness of the sum of two closed subspaces of the Banach space of bounded functions and apply this result on various pseudo almost periodic spaces and pseudo almost automorphic spaces. 
\end{abstract}

%
\maketitle

\noindent
{\bf 2010 Mathematic Subject Classification:}  42A75, 42A99, 47A05.
\vskip2mm
\noindent
{\bf Keywords:} Closed range operator, closed sum of subspaces, completeness theorem, pseudo almost periodic function, pseudo almost automorphic function.

\vskip10mm
%

\section{Introduction}
\label{vi}

The origin of this work is the important work of Zheng and Ding  \cite{Zhe} on the completeness of the space of weighted pseudo almost automorphic functions. Here we identify two abstract general tools which we used to give a new proof and to generalize  the result of Zheng and Ding. Then we apply these general tools to state the completeness of various spaces of pseudo almost periodic functions or pseudo almost automorphic functions.

\vskip1mm
The first general tool is a characterization of the closeness of the sum of two closed vector subspaces of a Banach space. Such a characterization is established from a characterization of the closeness of the image of a continuous linear operator. The second general tool is a radial retraction, a notion which is associated to the names of Dunkl and Williams \cite{Du-Wi}.
\vskip1mm

We describe the contents of this work. 
In Section \ref{viii} we give a new proof of a theorem on the characterization of the closeness of the image of a continuous linear operator, we prove that this result of characterization is equivalent to Inverse Mapping Theorem of Banach, and we extend this characterization to  unbounded closed linear operators. In Section \ref{ix} we provide a characterization of the closeness of the sum of two closed vector subspaces of a Banach space.
In Section \ref{i} we provide  sufficient conditions to ensure the  closeness of the sum of two closed vector subspaces of the Banach space of bounded functions under the supremum norm by using a result on a radial projection. In Section \ref{ii} we apply these general tools to state the completeness of 
the spaces of $\mu$-pseudo almost periodic and $\mu$-pseudo almost automorphic functions without restrictive hypothesis on the measure $\mu$.
In Section \ref{v} we study the completeness of the spaces of square-mean  $\mu$-pseudo almost periodic and  square-mean $\mu$-pseudo almost automorphic processes which are used in stochastic  evolution equations.
We states similar results in Section \ref{iv} for the space of $\mu$-pseudo almost periodic functions defined on the half-line and in Section \ref{iii} for the spaces of weighted pseudo almost periodic sequences and weighted pseudo almost automorphic sequences.

\section{Closed ranges}
\label{viii}

We provide a characterization of the closeness of the range of a linear operators. 
\vskip2mm

\begin{theorem}\label{th31}
Let $E$ and $F$ be two Banach spaces and $L : E \rightarrow F$ be a continuous linear mapping. The two following assertions are equivalent.
\vskip2mm
{\bf i)} ${\rm Im} \, L$ is closed into $F$.
\vskip2mm
{\bf ii)} $\exists c >0$, $\forall y \in Im L$, $\exists x \in E$ such that $L(x) = y$ and $\Vert x \Vert \leq c  \, \Vert y \Vert$.
\end{theorem}
%
\vskip2mm
%
\begin{remark}\label{rem32}
A proof of $ii) \Longrightarrow i)$ is given in \cite{BaBl}, Lemma 3.4, and the proof of $i) \Longrightarrow ii)$ is given in \cite{DS}, Lemma 1 in Section 6 of Chapter IV. 
Following our own knowledge, this result was first established by 
Ganesa Moorthy and Johnson in \cite{GMJ}. We give now another proof of 
Theorem \ref{th31}. 
The proof given below is very different and shorter than the two previously cited.  
\end{remark}

\vskip    2mm \noindent\textit{Proof of Theorem \ref{th31}.}
We consider the following equivalence relation on $E$: 
$$x \sim y \Longleftrightarrow L(x) = L(y)\Longleftrightarrow x-y \in {\rm Ker} \, L.$$
We consider the quotient space $E^{\bullet} := E / \sim$. An element $x^{\bullet}$ of 
$E^{\bullet}$ is $x^{\bullet} = x + {\rm Ker} \, L$ when $x \in E$. We know that $E^{\bullet}$ is a vector space. 
Since $L$ is continuous, ${\rm Ker} \, L$ is closed into $E$, that permits to $\Vert x^{\bullet} \Vert^{\bullet} := \inf \{ \Vert u \Vert : u \in x + {\rm Ker} \, L \}$ to be a norm on $E^{\bullet}$.  Since $E$ is complete, $(E^{\bullet}, \Vert \cdot \Vert^{\bullet})$ is a Banach space, cf. \cite{S} p. 299. We define $L^{\bullet} : E^{\bullet} \rightarrow F$ by setting $L^{\bullet}(x^{\bullet}) := L(x)$ when $x \in E$. $L^{\bullet}$ is linear and continuous. $L^{\bullet}$ is clearly injective. We define the following abridgement of $L^{\bullet}$: the mapping $K : E^{\bullet} \rightarrow {\rm Im} \,L$ defined by $K(x^{\bullet}) := L^{\bullet}(x^{\bullet})$. $K$ remains linear, continuous, injective and moreover $K$ is surjective. And so we can define the inverse of $K$, $K^{-1} : {\rm Im} \,L \rightarrow E^{\bullet}$, which is linear. 
\vskip2mm
{\bf i) $\Longrightarrow$ ii)}. Since ${\rm Im} \,L$ is closed in the Banach space $F$, it is a Banach space. Since $E^{\bullet}$ is a Banach space, we can use the Inverse Mapping Theorem of Banach and obtain that $K^{-1}$ is continuous. Therefore we have
$$\exists c_0 >0, \forall y \in {\rm Im} \,L, \qquad \Vert K^{-1}(y) \Vert^{\bullet} \leq c_0 \, \Vert y \Vert.$$
Let $y$ an element of ${\rm Im} \, L$.  We arbitrarily fix $x_0 \in K^{-1}(y)$. When $y \neq 0$, we have $\Vert K^{-1}(y) \Vert^{\bullet} < 2c_0 \, \Vert y \Vert$ which means that $\inf \{ \Vert u \Vert : u \in x_0 + {\rm Ker} \, L \} < 2c_0 \, \Vert y 
\Vert$, so there exists $x \in x_0 + {\rm Ker} \, L$  such that $\Vert x \Vert \leq 2c_0 \, \Vert y \Vert$; and moreover $L(x)=y$. 
When $y = 0$ we take $x = 0$ and we obtain the same inequality. Therefore ii) is proven with $c=2c_0$.
\vskip2mm
{\bf ii) $\Longrightarrow$ i)}. Let $y \in {\rm Im} \,L$. From ii) we know that there exists $x \in L^{-1}( \{ y \}) = K^{-1}(y)$ such that $\Vert x \Vert \leq c \, \Vert y \Vert$ that implies: $\Vert x^{\bullet} \Vert^{\bullet} \leq \Vert x \Vert \leq c \, \Vert y \Vert$. And so we have proven
$$\exists c >0, \forall y \in {\rm Im} \, L, \qquad \Vert K^{-1}(y) \Vert^{\bullet} \leq c \, \Vert y \Vert.$$
Since $K^{-1}$ is linear, this last assertion means that $K^{-1}$ is continuous. Let $(y_k)_k$ be a Cauchy sequence in ${\rm Im} \,L$. Since $K^{-1}$ is Lipschitzian, $(K^{-1}(y_k))_k$ is also a Cauchy sequence in $E^{\bullet}$. Since $E^{\bullet}$ is complete, there exists $z^{\bullet} \in E^{\bullet}$ such $(K^{-1}(y_k))_k$ converges toward $z^{\bullet}$. Since $K$ is continuous $(y_k)_k = (K(K^{-1}(y_k))_k$ converges toward $K(z^{\bullet})$ into ${\rm Im} \,L$. And so ${\rm Im} \,L$ is complete and therefore it is closed.
\hfill$\Box$

\begin{remark}\label{rem33}
The previous proof shows that Theorem \ref{th31} is a consequence of Inverse Mapping Theorem of Banach. In fact, 
Theorem \ref{th31} is equivalent to Inverse Mapping Theorem of Banach. If we assume that Theorem \ref{th31} is true, and if we consider 
\newline
$L \in {\mathcal L}(E,F)$, where $E$ and $F$ are Banach spaces and $L$ is bijective, since $L$ is surjective, we have ${\rm Im} \,L = F$ is closed in $F$ and consequently, using Theorem \ref{th31}, we obtain: $\exists c >0$, $\forall y \in F$, $\Vert L^{-1}(y) \Vert \leq c \, \Vert y \Vert$,
which means the continuity of the inverse $L^{-1}$.
\end{remark}

\vskip1mm

We can extend Theorem \ref{th31} to unbounded closed linear operators.

\begin{theorem}\label{th34}
Let $E$ and $F$ be Banach spaces and $T : {\mathfrak D}(T) \subset E \rightarrow F$ be a unbounded linear operator. We assume that $T$ is closed. Then the two following assertions are equivalent.
\vskip2mm
{\bf i)} ${\rm Im} \,T$ is closed into $F$.
\vskip2mm
{\bf ii)} $\exists c >0$, $\forall y \in {\rm Im} \, T$, $\exists x \in {\mathfrak D}(T)$ such that $T(x) = y$ and $\Vert x \Vert_E \leq c  \,\Vert y \Vert_F$.
\end{theorem}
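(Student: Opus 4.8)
The plan is to reduce Theorem \ref{th34} to the already established Theorem \ref{th31} by converting the unbounded operator $T$ into a \emph{bounded} operator acting on an appropriate Banach space, namely the domain ${\mathfrak D}(T)$ equipped with the graph norm. First I would set $\Vert x \Vert_T := \Vert x \Vert_E + \Vert T(x) \Vert_F$ for $x \in {\mathfrak D}(T)$ and observe that $({\mathfrak D}(T), \Vert \cdot \Vert_T)$ is a Banach space. This is exactly the point at which the hypothesis that $T$ is closed is used: a Cauchy sequence $(x_n)_n$ for the graph norm satisfies $x_n \to x$ in $E$ and $T(x_n) \to z$ in $F$, and closedness of $T$ forces $x \in {\mathfrak D}(T)$ with $T(x) = z$, whence $\Vert x_n - x \Vert_T \to 0$. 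I denote this Banach space by $E_T$.

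Next I would regard $T$ as a mapping $\widetilde{T} : E_T \rightarrow F$. Since $\Vert T(x) \Vert_F \leq \Vert x \Vert_T$, the operator $\widetilde{T}$ is continuous, and its image coincides with ${\rm Im} \, T$ as a subset of $F$; in particular ${\rm Im} \, T$ is closed in $F$ if and only if ${\rm Im} \, \widetilde{T}$ is. Theorem \ref{th31} then applies verbatim to the continuous operator $\widetilde{T}$ between the two Banach spaces $E_T$ and $F$, giving the equivalence of the closeness of ${\rm Im} \, \widetilde{T}$ with the existence of a constant $c' > 0$ such that every $y \in {\rm Im} \, T$ admits a preimage $x \in E_T$ with $\Vert x \Vert_T \leq c' \, \Vert y \Vert_F$.

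It then remains only to pass between the graph-norm estimate and the $E$-norm estimate of assertion ii), which is a routine comparison resting on $\Vert x \Vert_E \leq \Vert x \Vert_T$ and on the identity $\Vert x \Vert_T = \Vert x \Vert_E + \Vert T(x) \Vert_F$. For the implication i) $\Longrightarrow$ ii), from $\Vert x \Vert_E \leq \Vert x \Vert_T \leq c' \, \Vert y \Vert_F$ one reads off ii) with the same constant $c = c'$. For ii) $\Longrightarrow$ i), given a preimage $x \in {\mathfrak D}(T)$ with $T(x) = y$ and $\Vert x \Vert_E \leq c \, \Vert y \Vert_F$, the identity above yields $\Vert x \Vert_T = \Vert x \Vert_E + \Vert y \Vert_F \leq (c+1) \, \Vert y \Vert_F$, which is precisely the hypothesis required to invoke Theorem \ref{th31} for $\widetilde{T}$ and conclude that ${\rm Im} \, \widetilde{T} = {\rm Im} \, T$ is closed.

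I do not anticipate a serious obstacle. The one substantive step is the first: verifying that the closedness of $T$ is exactly what promotes the graph norm to a complete norm on ${\mathfrak D}(T)$. Once $E_T$ is known to be a Banach space and $\widetilde{T}$ is seen to be continuous, the statement follows immediately from Theorem \ref{th31} together with the two elementary norm inequalities above.
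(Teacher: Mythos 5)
Your proposal is correct and follows essentially the same route as the paper: both equip ${\mathfrak D}(T)$ with the graph norm (complete by closedness of $T$), view $T$ as a bounded operator from that Banach space into $F$, apply Theorem \ref{th31}, and translate between the graph-norm bound and the $E$-norm bound via $\Vert x \Vert_E \leq \Vert x \Vert_E + \Vert T(x)\Vert_F$ and the constant shift $c \mapsto c+1$. The only cosmetic difference is that you spell out the completeness of the graph norm, whereas the paper cites Kato for it.
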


\begin{proof}
First recall that $\Vert x \Vert_{\mathcal G} :=   \Vert x \Vert_E +\Vert T(x) \Vert_F$ defines a norm on ${\mathfrak D}(T)$ and the closeness of $T$ implies that $({\mathfrak D}(T), \Vert \cdot \Vert_{\mathcal G})$ is complete (cf. \cite{Ka} p.164). 
We define the following abridgement of $T$: the mapping $L : {\mathfrak D}(T) \rightarrow F$ defined by $L(x) := T(x)$ for $x\in {\mathfrak D}(T)$.
Since, $\forall x \in {\mathfrak D}(T)$, $\Vert L(x) \Vert_F=\Vert T(x) \Vert_F \leq   \Vert x \Vert_{\mathcal G}$, we have $L \in {\mathfrak L}( ({\mathfrak D}(T), \Vert \cdot \Vert_{\mathcal G}), (F, \Vert \cdot \Vert_F))$.
We introduce the following new condition.
\begin{equation}
\label{eq6}
\exists d >0, \forall y \in {\rm Im} \, L, \exists x \in {\mathfrak D}(T) \text{ such that }L(x) = y \text{ and } \Vert x \Vert_{\mathcal G} \leq d \, \Vert y \Vert_F .
\end{equation}
The equivalence i) $\Longleftrightarrow$ \eqref{eq6}  is a consequence of 
Theorem \ref{th31} applied on the
continuous linear mapping
$L \in {\mathfrak L}( ({\mathfrak D}(T), \Vert \cdot \Vert_{\mathcal G}), (F, \Vert \cdot \Vert_F))$  by remarking that ${\rm Im} \,L = {\rm Im} \,T$.
\vskip2mm
Let us  prove the equivalence \eqref{eq6} $\Longleftrightarrow$ ii).
For the implication \eqref{eq6} $\Longrightarrow ii)$ it suffices to take $c= d$ and to note that $\Vert x \Vert_E \leq \Vert x \Vert_{\mathcal G}$.
Now we prove the reciprocal implication.
Let $y \in {\rm Im} \, T$. Then, from ii), $\exists x \in {\mathfrak D}(T)$ such that $T(x) = y$ and $\Vert x \Vert_E \leq c  \,\Vert y \Vert_F$ which imply 
$\Vert x \Vert_{\mathcal G} =   \Vert x \Vert_E +\Vert T(x) \Vert_F =   \Vert x \Vert_E + \Vert y \Vert_F \leq  (c+1) \,  \Vert y \Vert_F$.
Setting $d := c+1$ we obtain $\Vert x \Vert_{\mathcal G}  \leq  d \,  \Vert y \Vert_F$.
And so we have proven the following assertion ii) $\Longrightarrow$ \eqref{eq6}.
\end{proof}

\section{General closed sums}
\label{ix}

In this section we provide necessary and sufficient conditions to have the closeness of the sum of two closed vector sunspaces in a Banach space.

\begin{theorem}\label{th41}
Let $E$ be a Banach space, and $M$ and $N$ be two closed vector subspaces of $E$. Then the three following conditions are equivalent.
\vskip2mm
{\bf i)} $M+N$ is closed into $E$.
\vskip2mm
{\bf ii)} $\exists c >0$, $\forall z \in M+N$, $\exists (x,y) \in M \times N$ such that $x+y = z$  and $\Vert x \Vert \leq c \,\Vert z \Vert$.
\end{theorem}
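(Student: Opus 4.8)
The plan is to reduce Theorem \ref{th41} to Theorem \ref{th31} by realizing the sum $M+N$ as the image of a single continuous linear operator, so that the completeness machinery already developed applies verbatim. Concretely, I would equip the product space $M \times N$ with the norm $\norm{(x,y)} := \norm{x} + \norm{y}$. Since $M$ and $N$ are closed vector subspaces of the Banach space $E$, each of them is itself a Banach space, and therefore $(M \times N, \norm{\cdot})$ is a Banach space. I then define $L : M \times N \To E$ by $L(x,y) := x + y$. This map is plainly linear, and the estimate $\norm{L(x,y)} = \norm{x+y} \leq \norm{x} + \norm{y} = \norm{(x,y)}$ shows that it is continuous. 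By construction its image is exactly ${\rm Im}\, L = M+N$.

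With this setup, Theorem \ref{th31} applied to $L$ yields at once that ${\rm Im}\, L = M+N$ is closed in $E$ if and only if there exists $c' > 0$ such that every $z \in M+N$ admits a preimage $(x,y) \in M \times N$ with $L(x,y) = z$ and $\norm{(x,y)} \leq c' \norm{z}$; in other words, $x+y = z$ and $\norm{x} + \norm{y} \leq c' \norm{z}$. This already establishes the equivalence of i) with the \emph{symmetric} version of ii), in which both summands are controlled. The only remaining task is to verify that this symmetric estimate and the bound on $\norm{x}$ alone appearing in condition ii) are equivalent up to the value of the constant.

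This reconciliation is the one step requiring a short argument, but it is not a genuine obstacle, being handled by the triangle inequality. If $\norm{x} + \norm{y} \leq c' \norm{z}$, then \emph{a fortiori} $\norm{x} \leq c' \norm{z}$, so ii) holds with $c = c'$. Conversely, if ii) holds with constant $c$, then, since $y = z - x$, one has $\norm{y} \leq \norm{z} + \norm{x} \leq (1+c)\norm{z}$, whence $\norm{(x,y)} = \norm{x} + \norm{y} \leq (1+2c)\norm{z}$, which is the symmetric condition with $c' = 1+2c$. The chain i) $\Longleftrightarrow$ (symmetric ii) $\Longleftrightarrow$ ii) then completes the proof. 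I expect the delicate point in the writeup to be nothing more than keeping the asymmetry of condition ii) in mind when quoting Theorem \ref{th31}, whose preimage bound is naturally stated on the full product norm.
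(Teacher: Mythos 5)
Your proof is correct and follows essentially the same route as the paper: both reduce the statement to Theorem \ref{th31} via the operator $L(x,y) = x+y$ on the Banach space $M \times N$ (you use the sum norm, the paper the max norm --- an immaterial difference), and both reconcile the resulting symmetric bound with the one-sided bound of ii) by the same triangle-inequality argument.
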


\begin{proof}
Let us consider the following assertion
\begin{equation}
\label{eq20}
\exists d >0, \forall z \in M+N, \exists (x,y) \in M \times N \text{ s.t. } x+y = z   , \Vert x \Vert \leq d \, \Vert z \Vert \text{ , }  \Vert y \Vert \leq d \, \Vert z \Vert.
\end{equation}
The equivalence $i) \Longleftrightarrow$ \eqref{eq20} is a consequence of 
Theorem \ref{th31}
by using  the continuous linear  mapping $L : M \times N \rightarrow E$ defined by  $L(x,y) := x+y$
and endowing the product $M \times N$ with the norm $\Vert (x,y) \Vert  = \max\{\Vert x \Vert ,  \Vert y \Vert\}$. 
The implication \eqref{eq20} $\Longrightarrow ii)$ is obvious with $c=d$ and for the reciprocal implication, note that if $\norm{x}\leq c \, \norm{z}$, we have
$$\Vert y \Vert - \Vert z \Vert \leq \Vert y - z \Vert = \Vert x \Vert \leq c \, \Vert z \Vert \Longrightarrow \Vert y \Vert \leq (c+1) \, \Vert z \Vert,$$
then the implication ii) $\Longrightarrow$ \eqref{eq20} is proved with $d=c+1$.
\end{proof}

\begin{remark}\label{rem42}
There exists another proof of Theorem \ref{th41} due to  Zheng and  Ding in \cite{Zhe2}.
The proof of i) $\Longrightarrow$ \eqref{eq20} is given in \cite{Br} (Theorem 2.10, p. 37).
A result as Theorem \ref{th41} is established by Kober \cite{Ko} under the additional assumption $M \cap N = \{ 0 \}$.
\end{remark}

\section{Closeness of the sum two vector subspaces of a Banach space of bounded functions}
\label{i}

In this section $U$ is a set and $X$ is a Banach space. $B(U,X)$ denotes the Banach space of all bounded functions from $U$ into $X$, equipped with the norm $\ds\norm{f}_{\infty} = \sup_{u\in U} \norm{f(u)}$ for $f \in B(U,X)$.
Here we establish  sufficient conditions to have the closeness of the sum of two closed subspaces of the Banach space of bounded functions   using a result on the radial retraction.

\begin{theorem} 
\label{Tw2} 
Let $\mathcal{F}_1$ and $\mathcal{F}_2$ be two closed vector subspaces of $B(U,X)$ satisfying the following hypothesis:
\vskip 2mm 
{\bf i)} The map $\Phi : X\to X$ is  Lipschitz and  $f\in\mathcal{F}_1$, then $\Phi\circ f\in\mathcal{F}_1$.
\vskip 2mm 
{\bf ii)} $g\in\mathcal{F}_1$ and $h\in\mathcal{F}_2$ such that $\forall u\in U$, $\norm{g(u)}\leq\norm{h(u)}$, then $g\in\mathcal{F}_2$.
\vskip         2mm
Then $\mathcal{F}_1 + \mathcal{F}_2$ is a Banach space endowed with the norm $\norm{\cdot}_{\infty}$. 
\end{theorem}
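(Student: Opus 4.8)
The plan is to verify condition ii) of Theorem~\ref{th41} with constant $c=1$ and then invoke that theorem: since $\mathcal{F}_1$ and $\mathcal{F}_2$ are closed subspaces of the Banach space $B(U,X)$, Theorem~\ref{th41} will yield that $\mathcal{F}_1+\mathcal{F}_2$ is closed, hence complete, hence a Banach space for $\norm{\cdot}_{\infty}$. So the whole task reduces to the following: given $z\in\mathcal{F}_1+\mathcal{F}_2$, produce a decomposition $z=x+y$ with $x\in\mathcal{F}_1$, $y\in\mathcal{F}_2$ and $\norm{x}_{\infty}\leq\norm{z}_{\infty}$. The idea is to start from an arbitrary decomposition and then shrink its $\mathcal{F}_1$-part by a radial retraction so that it becomes controlled by $\norm{z}_{\infty}$.

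Concretely, I would fix any $g\in\mathcal{F}_1$, $h\in\mathcal{F}_2$ with $z=g+h$, set $\rho:=\norm{z}_{\infty}$, and let $r_{\rho}:X\to X$ be the radial retraction onto the closed ball of radius $\rho$, namely $r_{\rho}(v)=v$ when $\norm{v}\leq\rho$ and $r_{\rho}(v)=\rho\,v/\norm{v}$ otherwise. This map is Lipschitz (it is the radial retraction onto the unit ball rescaled, and the latter is Lipschitz by the Dunkl--Williams estimate, cf.\ \cite{Du-Wi}), so hypothesis i) applies to $\Phi:=r_{\rho}$ and gives $x:=r_{\rho}\circ g\in\mathcal{F}_1$. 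Moreover $r_{\rho}$ takes values in the ball of radius $\rho$, so $\norm{x(u)}\leq\rho$ for every $u$, that is $\norm{x}_{\infty}\leq\norm{z}_{\infty}$, which is exactly the required bound.

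It remains to check that $y:=z-x$ lies in $\mathcal{F}_2$. Writing $w:=g-x=g-r_{\rho}\circ g$, one has $w\in\mathcal{F}_1$ (a difference of two elements of $\mathcal{F}_1$) and $y=h+w$, so it suffices to prove $w\in\mathcal{F}_2$; for this I would use hypothesis ii). A pointwise computation from the definition of $r_{\rho}$ gives $\norm{w(u)}=\max\{\norm{g(u)}-\rho,\,0\}$, while the triangle inequality yields $\norm{h(u)}=\norm{z(u)-g(u)}\geq\norm{g(u)}-\norm{z(u)}\geq\norm{g(u)}-\rho$. Hence $\norm{w(u)}\leq\norm{h(u)}$ for all $u\in U$; since $w\in\mathcal{F}_1$ and $h\in\mathcal{F}_2$, hypothesis ii) forces $w\in\mathcal{F}_2$, and therefore $y=h+w\in\mathcal{F}_2$. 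This completes the decomposition $z=x+y$ and hence condition ii) of Theorem~\ref{th41}.

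I expect the crux to be the choice of truncation level and the resulting pointwise inequality: truncating $g$ at exactly $\rho=\norm{z}_{\infty}$ is what makes $\norm{w(u)}=\max\{\norm{g(u)}-\rho,0\}$ drop below $\norm{h(u)}=\norm{z(u)-g(u)}$, which is precisely the situation in which ii) is applicable. The two structural hypotheses then enter in a complementary way --- i) keeps the retracted function $x$ (and hence $w$) inside $\mathcal{F}_1$, and ii) pushes the small remainder $w$ into $\mathcal{F}_2$ --- and the only external input required is the Lipschitz property of the radial retraction. The degenerate case $z=0$, where $\rho=0$ and $x=0$, is covered by the same computation, since then $h=-g$ and $\norm{w(u)}=\norm{g(u)}=\norm{h(u)}$.
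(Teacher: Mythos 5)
Your proof is correct and follows the paper's overall strategy: verify condition ii) of Theorem~\ref{th41} with $c=1$ by applying the radial retraction at level $\norm{z}_{\infty}$ to the $\mathcal{F}_1$-part of an arbitrary decomposition, with hypothesis i) guaranteeing that the retracted function stays in $\mathcal{F}_1$. Where you genuinely diverge from the paper is the one delicate step, showing that the remainder lies in $\mathcal{F}_2$, and your version of that step is the more careful one. The paper sets $h^{*}=f-g^{*}$, uses $P_R\circ f=f$ and the $2$-Lipschitz bound of Lemma~\ref{Tw1} to get $\norm{h^{*}(u)}\leq 2\norm{h(u)}$, and then cites hypothesis ii) to conclude $h^{*}\in\mathcal{F}_2$; read literally, hypothesis ii) applies only to functions belonging to $\mathcal{F}_1$, and $h^{*}$ is not in $\mathcal{F}_1$ in general (it lies only in $\mathcal{F}_1+\mathcal{F}_2$, since $h^{*}=(g-g^{*})+h$), so the paper's write-up has a small gap at exactly this point. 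You instead isolate $w=g-r_{\rho}\circ g$, which genuinely belongs to $\mathcal{F}_1$, compute $\norm{w(u)}=\max\{\norm{g(u)}-\rho,\,0\}\leq\norm{h(u)}$ exactly (no Lipschitz constant, and no factor $2$, enters here), apply hypothesis ii) legitimately to the pair $(w,h)$, and conclude $y=h+w\in\mathcal{F}_2$ because $\mathcal{F}_2$ is a subspace. The trade-off: the paper's step is a line shorter and reuses the quantitative part of Lemma~\ref{Tw1}, whereas you need the Lipschitz property of $r_{\rho}$ only to make hypothesis i) applicable; in exchange, your argument is airtight, and it is essentially the repair one would propose for the paper's own proof.
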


For the proof of Theorem \ref{Tw2}, we use the radial retraction $P_R$ on the ball of radius $R$ and with the center at the origin.

\begin{lemma} 
\label{Tw1} 
Let $R\geq 0$. We define the function $P_R : X \to X$ by setting
\begin{equation}
\label{Eqw1}
P_R(x)=
\left\{
\begin{array}
[c]{l}
x \quad\qquad\text {if }\qquad \norm{x}\leq R \\
 \\
\frac{R}{\norm{x}} x \qquad\text {if }\qquad \norm{x}> R .
\end{array}
\right.
\end{equation}
Then the following assertions hold:
\vskip             2mm 
{\bf i)} $P_R$ is $2$-Lipschitz: $\forall x_1$, $x_2\in X$, $\norm{P_R(x_1)-P_R(x_2)}\leq 2\norm{x_1-x_2}$.
\vskip             2mm
{\bf ii)} For all $x\in X$, $\norm{P_R(x)}\leq R$.
\end{lemma}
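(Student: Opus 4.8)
The plan is to prove ii) first, since it is immediate, and then to establish the $2$-Lipschitz estimate i) by a case analysis according to the position of $x_1$ and $x_2$ relative to the ball of radius $R$.

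For ii), I would simply read off the two branches of the definition: if $\norm{x}\leq R$ then $P_R(x)=x$ and $\norm{P_R(x)}=\norm{x}\leq R$, whereas if $\norm{x}>R$ then $\norm{P_R(x)}=\frac{R}{\norm{x}}\norm{x}=R$. In both situations $\norm{P_R(x)}\leq R$. The degenerate case $R=0$ gives $P_0\equiv 0$, for which i) holds trivially, so in the sequel I assume $R>0$.

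For i), fix $x_1,x_2\in X$ and distinguish three cases. \emph{Case 1: $\norm{x_1}\leq R$ and $\norm{x_2}\leq R$.} Here $P_R(x_i)=x_i$, so $\norm{P_R(x_1)-P_R(x_2)}=\norm{x_1-x_2}$, which is better than required. \emph{Case 2: $\norm{x_1}\leq R<\norm{x_2}$, and the symmetric case.} I would pass through the point $x_2$ by the triangle inequality: since $\norm{x_2-P_R(x_2)}=\norm{x_2}-R\leq\norm{x_2}-\norm{x_1}\leq\norm{x_1-x_2}$, it follows that $\norm{x_1-P_R(x_2)}\leq\norm{x_1-x_2}+\norm{x_2-P_R(x_2)}\leq 2\norm{x_1-x_2}$.

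The main obstacle is \emph{Case 3: $\norm{x_1}>R$ and $\norm{x_2}>R$}, where both points are radially projected; this is where a Dunkl--Williams type estimate is needed. Here $P_R(x_1)-P_R(x_2)=R\left(\frac{x_1}{\norm{x_1}}-\frac{x_2}{\norm{x_2}}\right)$, so it suffices to control the difference of the normalized vectors. I would use the decomposition $\frac{x_1}{\norm{x_1}}-\frac{x_2}{\norm{x_2}}=\frac{x_1-x_2}{\norm{x_1}}+x_2\left(\frac{1}{\norm{x_1}}-\frac{1}{\norm{x_2}}\right)$ and bound the second term by $\frac{\abs{\norm{x_2}-\norm{x_1}}}{\norm{x_1}}\leq\frac{\norm{x_1-x_2}}{\norm{x_1}}$ via the reverse triangle inequality. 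This yields $\norm{\frac{x_1}{\norm{x_1}}-\frac{x_2}{\norm{x_2}}}\leq\frac{2\norm{x_1-x_2}}{\norm{x_1}}$, and multiplying by $R$ together with $R<\norm{x_1}$ gives $\norm{P_R(x_1)-P_R(x_2)}\leq 2\norm{x_1-x_2}$. Combining the three cases establishes i).
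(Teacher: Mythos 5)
Your proof is correct, and it diverges from the paper's in exactly one place: the case where both points lie outside the ball. Your part ii) and Cases 1--2 coincide with the paper's argument (the paper also passes through the point $\frac{R}{\norm{x_2}}x_2$ via the triangle inequality and the identity $\norm{x_2-\frac{R}{\norm{x_2}}x_2}=\norm{x_2}-R\leq\norm{x_1-x_2}$). For Case 3, however, the paper does not argue from scratch: it cites the Dunkl--Williams inequality
$\norm{\frac{x_1}{\norm{x_1}}-\frac{x_2}{\norm{x_2}}}\leq\frac{4}{\norm{x_1}+\norm{x_2}}\norm{x_1-x_2}$
and concludes from $\frac{4R}{\norm{x_1}+\norm{x_2}}\leq 2$, whereas you prove the elementary one-sided bound
$\norm{\frac{x_1}{\norm{x_1}}-\frac{x_2}{\norm{x_2}}}\leq\frac{2}{\norm{x_1}}\norm{x_1-x_2}$
by an add-and-subtract decomposition plus the reverse triangle inequality, and finish with $\frac{2R}{\norm{x_1}}\leq 2$. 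Both routes deliver the Lipschitz constant $2$. What each buys: your argument is entirely self-contained, needing no external reference; the paper's route displays the lemma as an immediate corollary of the named Dunkl--Williams inequality, which is the tool the authors emphasize in their introduction, and that inequality is symmetric in $x_1,x_2$ (your bound, as written, uses $\norm{x_1}$ in the denominator; taking $\max\{\norm{x_1},\norm{x_2}\}$ instead would make your estimate even sharper than Dunkl--Williams). A small additional point in your favor: you dispose of the degenerate case $R=0$ explicitly, while the paper's Case 3 tacitly applies Dunkl--Williams to vectors that are guaranteed nonzero only when $R>0$.
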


\begin{proof}
{\bf i)} If
\begin{equation}
\label{Eq10}
\norm{x_1}\leq R \leq \norm{x_2} ,
\end{equation}
then
$$\norm{P_R(x_1) - P_R(x_2)} = \norm{x_1 - \frac{R}{\norm{x_2}}x_2} \leq \norm{x_1 -x_2} + \norm{x_2 - \frac{R}{\norm{x_2}}x_2} ,$$
from
$\ds\norm{x_2 - \frac{R}{\norm{x_2}}x_2} = \left\vert \norm{x_2}- R \right\vert$ and \eqref{Eq10} we obtain
$$\norm{x_2 - \frac{R}{\norm{x_2}}x_2} \leq \norm{x_2}-\norm{x_1} \leq \norm{x_1-x_2} ,$$
therefore
\begin{equation}
\label{Eq11}
\norm{P_R(x_1)-P_R(x_2)} \leq 2\norm{x_1-x_2} .
\end{equation}
If $\norm{x_1}\leq R$ and $\norm{x_2}\leq R$, then we have \eqref{Eq11}, since $P_R(x_1)=x_1$ and $P_R(x_2)=x_2$.
Dunkl and Williams have proved in \cite{Du-Wi} that for all nonzero $x_1$ and $x_2\in X$
\begin{equation}
\label{Eq12}
\norm{\frac{x_1}{\norm{x_1}} - \frac{x_2}{\norm{x_2}}} \leq \frac{4}{\norm{x_1}+\norm{x_2}} \norm{x_1-x_2} .
\end{equation}
From 
$$\norm{P_R(x_1) - P_R(x_2)} = \norm{\frac{R}{\norm{x_1}}x_1 - \frac{R}{\norm{x_2}}x_2}$$
and \eqref{Eq12} it can be seen that \eqref{Eq11} is fulfilled if $\norm{x_1}\geq R$ and $\norm{x_2}\geq R$. This ends the proof.
\vskip 2 mm 
{\bf ii)} From  \eqref{Eqw1} we deduce ii).
\end{proof}

\vskip    2mm \noindent\textit{Proof of Theorem \ref{Tw2}.}
We use Theorem \ref{th41} by setting $E=B(U,X)$, $M=\mathcal{F}_1$ and $N=\mathcal{F}_2$.
Let us prove that 
\begin{equation}
\label{Eq4}
\forall f\in \mathcal{F}_1+\mathcal{F}_2 , \quad \exists g^*\in \mathcal{F}_1 , \quad \exists h^*\in \mathcal{F}_2 \text{ such that } f=g^*+h^* \text{ and } \norm{g^*}_{\infty}\leq\norm{f}_{\infty}  .
\end{equation}
Let $f\in \mathcal{F}_1+\mathcal{F}_2$. Then there exist $g\in \mathcal{F}_1$ and $h\in \mathcal{F}_2$ such that $f=g+h$. 
Define $g^*$ by setting $g^* = P_R\circ g$, with $R:=\norm{f}_{\infty}$ and $P_R$ the radial retraction of Lemma \ref{Tw1}. Define $h^*$ by setting $h^* = f - g^*$. Then we have $f=g^*+h^*$.
\vskip 1 mm
Since $P_R : X\to X$ is  Lipschitz and $g\in \mathcal{F}_1$, from Hypothesis i), we obtain
$P_R\circ g=g^*\in \mathcal{F}_1$. 
Inequality $\norm{g^*}_{\infty}\leq R=\norm{f}_{\infty}$ results of ii) of Lemma \ref{Tw1}. 
\vskip 1 mm
Since $\norm{f(u)}\leq R:=\norm{f}_{\infty}$ for all $u\in U$ and from the definition of  $P_R$, we deduce that $P_R\circ f = f$; therefore
$\norm{h^*(u)} = \norm{f(u) - g^*(u)} = \norm{P_R(f(u)) - P_R(g(u))}$. The radial retraction $P_R$ being $2$-Lipschitz, we obtain that
$$\forall u\in U,\qquad\norm{h^*(u)} =\norm{P_R(f(u)) - P_R(g(u))} \leq 2\norm{f(u)-g(u)} = 2\norm{h(u)} .$$
From $h\in\mathcal{F}_2$ and Hypothesis ii), we deduce that $h^* \in \mathcal{F}_2$; so \eqref{Eq4} is fulfilled, which is assertion ii) of Theorem \ref{th41} with $c=1$. 
\vskip 1 mm
By using Theorem \ref{th41} with $E=B(U,X)$, $M=\mathcal{F}_1$,  $N=\mathcal{F}_2$ and $c=1$,
we obtain the closeness of the vector subspace $\mathcal{F}_1+\mathcal{F}_2$ in the Banach space $(B(U,X),\norm{\cdot}_{\infty})$. 
\hfill$\Box$

\begin{remark} 
Under the hypotheses of Theorem \ref{Tw2}, if in addition we assume that $\mathcal{F}_1 \cap \mathcal{F}_2 =\{0\}$, the proof of this theorem is more simple. In this case, the decomposition (which becomes unique) $f = g+h$ with $g \in \mathcal{F}_1$ and $h \in \mathcal{F}_2$, we obtain $\Vert g \Vert_{\infty} \leq \Vert f \Vert_{\infty}$ from \eqref{Eq4}. This fact and the closeness of $\mathcal{F}_1$ and $\mathcal{F}_2$ imply the completeness of the sum.
\end{remark}

\section{Spaces of $\mu$-pseudo almost periodic and automorphic functions}
\label{ii}

In this section $X$ is a Banach space.

\subsection{Almost periodic case}

The pseudo almost periodic functions were introduced in the literature in the early nineties by Zhang \cite{Zha1, Zha2}, as a natural generalization of the classical almost periodic functions in the sense of Bohr. 
The notion of $\mu$-pseudo almost periodicity initiated by Blot, Cieutat and Ezzinbi \cite {27} is a generalization of the weighted pseudo almost periodicity due to Diagana \cite{Dia1, Dia2} which generalizes the one of pseudo almost periodicity.

\begin{definition}
\label{Def4}
\cite{Co}. $f\in C(\R,X)$ 
(continuous)
 is said to be
 \textit{almost periodic} (in the Bohr sense) if for all $\varepsilon>0$, there exists
${\ell}>0$, such that for all $\alpha\in\mathbb{R}$, there exists $\tau
\in\lbrack\alpha,\alpha+{\ell}]$ with
\[
\sup_{t\in\mathbb{R}}\left\|  f(t+\tau)-f(t)\right\|  <\varepsilon.
\]
We denote the space of all such functions by $AP(\mathbb{R},X)$.
\end{definition}

\vskip 2mm

We denote by $\mathcal{B}$ the Lebesgue $\sigma$-field of $\mathbb{R}$ and by
$\mathcal{M}$ the set of all positive measures $\mu$ on $\mathcal{B}$
satisfying $\mu(\mathbb{R})=+\infty$ and $\mu([a,b])<+\infty$, for all $a$,
$b\in\mathbb{R}$ ($a\leq b$).
$BC(\mathbb{R},X)$ denotes the
Banach space of all  continuous and bounded functions from $\mathbb{R}$ to $X$,
equipped with the norm
\[
\left\|  f\right\|  _{\infty}=\sup_{t\in\mathbb{R}}\left\|  f(t)\right\|  .
\]

\begin{definition}
\label{Def1}
\cite {26, 27}. Let $\mu\in\mathcal{M}$. $f\in BC(\R,X)$
is said to be \textit{$\mu$-ergodic} if 
\[
\underset{r\rightarrow+\infty}{\lim}\frac{1}{\mu([-r,r])}\int_{[-r,r]}
\left\Vert f(t)\right\Vert \,d\mu(t)=0 .
\]
We denote the space of all such functions by $\mathcal{E}(\mathbb{R}
,X,\mu)$.
\end{definition}

\begin{definition}
\label{Def5}
\cite {27}. Let $\mu\in\mathcal{M}$. $f\in BC(\R,X)$ is said to be \textit{$\mu$-pseudo almost periodic} if 
\[
f = g+\phi \quad \text{ where }g\in AP(\mathbb{R},X) , \quad \phi\in\mathcal{E}(\mathbb{R},X,\mu).
\]
We denote the space of all such functions by $PAP(\mathbb{R},X,\mu)$. 
\end{definition}

\begin{remark} 
In general the sum $PAP(\mathbb{R},X,\mu)=AP(\mathbb{R},X)+\mathcal{E}(\mathbb{R},X,\mu)$ is not direct (\cite{27}, Remark 2.26).
\end{remark}

\begin{theorem} 
\label{Tw3}
Let $\mu\in\mathcal{M}$. Then $(PAP(\mathbb{R},X,\mu),\left\Vert \cdot\right\Vert _{\infty})$ is a Banach space.
\end{theorem}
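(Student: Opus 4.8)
The plan is to apply Theorem \ref{Tw2} directly with $U = \mathbb{R}$, $\mathcal{F}_1 = AP(\mathbb{R},X)$ and $\mathcal{F}_2 = \mathcal{E}(\mathbb{R},X,\mu)$, so that $\mathcal{F}_1 + \mathcal{F}_2 = PAP(\mathbb{R},X,\mu)$ by Definition \ref{Def5}. The ambient Banach space will be $BC(\mathbb{R},X)$ rather than $B(\mathbb{R},X)$, but since $BC(\mathbb{R},X)$ is a closed subspace of $B(\mathbb{R},X)$ under $\norm{\cdot}_{\infty}$ and both $AP$ and $\mathcal{E}$ consist of continuous functions, the same radial-retraction argument applies verbatim provided we check the two hypotheses of Theorem \ref{Tw2} in this continuous setting. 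Before that, I would record that $AP(\mathbb{R},X)$ and $\mathcal{E}(\mathbb{R},X,\mu)$ are themselves closed vector subspaces of $BC(\mathbb{R},X)$; closedness of $AP$ is classical (uniform limits of almost periodic functions are almost periodic), and closedness of $\mathcal{E}(\mathbb{R},X,\mu)$ follows from a standard estimate showing that a uniform limit of $\mu$-ergodic functions is $\mu$-ergodic.

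The core of the proof is verifying the two structural hypotheses. For Hypothesis i), I must show that if $\Phi : X \to X$ is Lipschitz and $f \in AP(\mathbb{R},X)$, then $\Phi \circ f \in AP(\mathbb{R},X)$. This is immediate from the definition: if $\tau$ is an $\varepsilon$-almost period of $f$, i.e. $\sup_t \norm{f(t+\tau)-f(t)} < \varepsilon$, then $\sup_t \norm{\Phi(f(t+\tau))-\Phi(f(t))} \leq \mathrm{Lip}(\Phi)\,\varepsilon$, so the same translation set works after rescaling $\varepsilon$. (Continuity of $\Phi \circ f$ is clear.) For Hypothesis ii), I need that if $g \in AP(\mathbb{R},X)$, $h \in \mathcal{E}(\mathbb{R},X,\mu)$ and $\norm{g(t)} \leq \norm{h(t)}$ for all $t$, then $g \in \mathcal{E}(\mathbb{R},X,\mu)$. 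This too is direct from Definition \ref{Def1}: since $0 \leq \norm{g(t)} \leq \norm{h(t)}$ pointwise, monotonicity of the integral gives
\[
\frac{1}{\mu([-r,r])}\int_{[-r,r]} \norm{g(t)}\,d\mu(t) \leq \frac{1}{\mu([-r,r])}\int_{[-r,r]} \norm{h(t)}\,d\mu(t),
\]
and the right-hand side tends to $0$ as $r \to +\infty$ because $h$ is $\mu$-ergodic, forcing the left-hand side to $0$ as well; hence $g \in \mathcal{E}(\mathbb{R},X,\mu)$.

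Once both hypotheses are in place, Theorem \ref{Tw2} yields that $AP(\mathbb{R},X) + \mathcal{E}(\mathbb{R},X,\mu) = PAP(\mathbb{R},X,\mu)$ is closed in the relevant Banach space of bounded functions, hence complete, which is exactly the assertion. I expect the main obstacle to be a small bookkeeping point rather than a deep one: Theorem \ref{Tw2} is stated for $B(U,X)$, whereas here the natural ambient space is $BC(\mathbb{R},X)$. The cleanest fix is to observe that the radial retraction $P_R$ maps continuous functions to continuous functions, so the decomposition $f = g^* + h^*$ produced in the proof of Theorem \ref{Tw2} stays inside $BC(\mathbb{R},X)$; thus one may either reread the proof of Theorem \ref{Tw2} with $B(U,X)$ replaced by its closed subspace $BC(\mathbb{R},X)$, or simply invoke that $BC(\mathbb{R},X)$ is a Banach space in its own right and apply Theorem \ref{th41} to the two closed subspaces $\mathcal{F}_1, \mathcal{F}_2$ of $BC(\mathbb{R},X)$ using the already-established estimate $\norm{g^*}_{\infty} \leq \norm{f}_{\infty}$.
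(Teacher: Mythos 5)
Your proposal is correct and takes essentially the same route as the paper: both apply Theorem \ref{Tw2} with $U=\R$, $\mathcal{F}_1=AP(\R,X)$, $\mathcal{F}_2=\mathcal{E}(\R,X,\mu)$, checking hypothesis i) by composition (you give a direct Lipschitz argument, the paper cites the stronger fact that composition with any continuous $\Phi$ preserves almost periodicity) and hypothesis ii) by monotonicity of the integral. Your concern about $BC(\R,X)$ versus $B(\R,X)$ is a non-issue that the paper dispatches in one line: since $BC(\R,X)$ is itself closed in $B(\R,X)$ and both subspaces are closed in $BC(\R,X)$, they are closed in $B(\R,X)$, so Theorem \ref{Tw2} applies verbatim without any modification of its proof.
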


\begin{proof}
We use Theorem \ref{Tw2} by setting $U=\R$, $\mathcal{F}_1=AP(\R,X)$ and $\mathcal{F}_2=\mathcal{E}(\mathbb{R},X,\mu)$.
\vskip 2 mm
{\bf i)} $AP(\R,X)$ is a closed vector subspace of $BC(\R,X)$ (\cite{Co}, Theorem 6.1 and 6.2, pp. 138-139), then a closed vector subspace of $B(\R,X)$. Moreover if $\Phi\in C(X,X)$ and $f\in AP(\R,X)$ then $\Phi\circ f\in AP(\R,X)$ (\cite{AmPr}, VII, p. 5); so Hypothesis i) is satisfied.
\vskip 2 mm
{\bf ii)} $\mathcal{E}(\mathbb{R},X,\mu)$ endowed with the norm $\norm{\cdot}_{\infty}$ is a Banach space (\cite{27}, Proposition 2.11), then a closed vector subspace of $B(\R,X)$. By using Definition \ref{Def1}, we deduce that the pair $AP(\R,X)$ and $\mathcal{E}(\mathbb{R},X,\mu)$ satisfies Hypothesis ii).
\vskip 2 mm
By Theorem \ref{Tw2}, we obtain that $PAP(\mathbb{R},X,\mu)=AP(\mathbb{R},X) + \mathcal{E}(\mathbb{R},X,\mu)$  is a Banach space endowed with the norm $\norm{\cdot}_{\infty}$. 
\end{proof}

\begin{remark} 
With restrictive conditions on the measure $\mu$, this result is proved (\cite{27}, Corollary 2.31).
\end{remark}

\begin{remark} 
Theorem \ref{Tw3} generalizes Corollary 2.4 in \cite{Zhe} of Zheng and Ding from the space of weighted pseudo almost periodic functions to the space of $\mu$-pseudo almost periodic functions. 
\end{remark}

\subsection{Almost automorphic case}

The notion of $\mu$-pseudo almost automorphy  due to Blot, Cieutat and Ezzinbi  \cite {26} is a generalization of the weighted pseudo almost automorphy initiated by  Blot, Mophou, N'Gu$\acute{e}$r$\acute{e}$kata and Pennequin
\cite{Blo} which generalizes the one of pseudo almost automorphy introduced by 
Liang, N'Gu\'er\'ekata, Xiao and Zhang \cite{Lia}.

\begin{definition}
\label{Def6}
\cite{ngu01}. $f\in C(\R,X)$ is said to
be \textit{almost automorphic} if for every sequence of real numbers $(s_{n}^{\prime
})_{n}$ 
there exists a subsequence of $(s_{n}^{\prime})_{n}$ denoted by $(s_{n})_{n}$ such that
\[
g(t)=\underset{n\rightarrow\infty}{\lim}f(t+s_{n})\text{ exists for all
}t\text{ in }\mathbb{R}
\]
and
\[
\underset{n\rightarrow\infty}{\lim}g(t-s_{n})=f(t)\text{ for all }t\text{ in
}\mathbb{R}\text{.}
\]
We denote the space of all such functions by $AA(\mathbb{R},X)$.

\end{definition}

\begin{definition}
\label{Def7}
\cite {26}. Let $\mu\in\mathcal{M}$. $f\in BC(\R,X)$ is said to be \textit{$\mu$-pseudo almost automorphic} if 
\[
f = g+\phi \quad \text{ where }g\in AA(\mathbb{R},X) , \quad \phi\in\mathcal{E}(\mathbb{R},X,\mu).
\]
We denote the space of all such functions by $PAA(\mathbb{R},X,\mu)$. 
\end{definition}

\begin{remark} 
In general the sum $PAA(\mathbb{R},X,\mu)=AA(\mathbb{R},X)+\mathcal{E}(\mathbb{R},X,\mu)$ is not direct (\cite{26}, Remark 4.4). A sufficient condition for $PAA(\mathbb{R},X,\mu)=AA(\mathbb{R},X) \bigoplus \mathcal{E}(\mathbb{R},X,\mu)$
is that $PAA(\mathbb{R},X,\mu)$ be translation invariant.
\end{remark}

\begin{theorem} 
\label{Tw4}
Let $\mu\in\mathcal{M}$. Then $(PAA(\mathbb{R},X,\mu),\left\Vert \cdot\right\Vert _{\infty})$ is a Banach space.
\end{theorem}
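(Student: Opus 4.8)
The plan is to apply Theorem \ref{Tw2} exactly as was done for the almost periodic case in Theorem \ref{Tw3}, replacing the role of $AP(\mathbb{R},X)$ by $AA(\mathbb{R},X)$. I would set $U=\mathbb{R}$, $\mathcal{F}_1 = AA(\mathbb{R},X)$ and $\mathcal{F}_2 = \mathcal{E}(\mathbb{R},X,\mu)$, so that $\mathcal{F}_1 + \mathcal{F}_2 = PAA(\mathbb{R},X,\mu)$ by Definition \ref{Def7}. It then suffices to verify the two hypotheses of Theorem \ref{Tw2} and conclude that the sum is a Banach space under $\norm{\cdot}_\infty$.

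\medskip

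First I would check that both subspaces are closed in $B(\mathbb{R},X)$. The space $AA(\mathbb{R},X)$ is a closed vector subspace of $BC(\mathbb{R},X)$, and hence of $B(\mathbb{R},X)$, which is a standard fact from the theory of almost automorphic functions (it can be cited from N'Gu\'er\'ekata's reference \cite{ngu01}). The space $\mathcal{E}(\mathbb{R},X,\mu)$ is a Banach space under $\norm{\cdot}_\infty$ by \cite{27}, Proposition 2.11, hence closed in $B(\mathbb{R},X)$, exactly as used in the proof of Theorem \ref{Tw3}.

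\medskip

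Next I would verify Hypothesis i) of Theorem \ref{Tw2}: that composition with a Lipschitz map $\Phi:X\to X$ preserves $AA(\mathbb{R},X)$. The key step here is that almost automorphy is stable under composition with a continuous (in particular Lipschitz) function; if $f\in AA(\mathbb{R},X)$ and $(s_n)$ is a subsequence along which $f(t+s_n)\to g(t)$ pointwise and $g(t-s_n)\to f(t)$ pointwise, then continuity of $\Phi$ gives $\Phi(f(t+s_n))\to \Phi(g(t))$ and $\Phi(g(t-s_n))\to \Phi(f(t))$ pointwise, so $\Phi\circ f\in AA(\mathbb{R},X)$. Hypothesis ii) of Theorem \ref{Tw2} concerns the pair $(\mathcal{F}_1,\mathcal{F}_2)$ and is checked exactly as in Theorem \ref{Tw3}: if $g\in AA(\mathbb{R},X)$, $h\in\mathcal{E}(\mathbb{R},X,\mu)$ and $\norm{g(u)}\leq\norm{h(u)}$ for all $u$, then the $\mu$-ergodic mean of $\norm{g}$ is dominated by that of $\norm{h}$, so $g\in\mathcal{E}(\mathbb{R},X,\mu)$ directly from Definition \ref{Def1}. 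With both hypotheses in hand, Theorem \ref{Tw2} yields that $PAA(\mathbb{R},X,\mu)=AA(\mathbb{R},X)+\mathcal{E}(\mathbb{R},X,\mu)$ is a Banach space under $\norm{\cdot}_\infty$.

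\medskip

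I do not expect any genuine obstacle; the only point requiring minor care is the stability of $AA(\mathbb{R},X)$ under composition with a continuous function, since the almost automorphic convergence is merely pointwise (not uniform as in the Bohr case), but continuity of $\Phi$ transfers pointwise convergence through the composition without difficulty. Everything else is a verbatim transcription of the argument for Theorem \ref{Tw3}.
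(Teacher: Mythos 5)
Your proposal is correct and follows essentially the same route as the paper: apply Theorem \ref{Tw2} with $U=\R$, $\mathcal{F}_1=AA(\R,X)$, $\mathcal{F}_2=\mathcal{E}(\mathbb{R},X,\mu)$, verify closedness of both subspaces and the two hypotheses exactly as in Theorem \ref{Tw3}, and conclude. The only difference is cosmetic: you prove the stability of $AA(\R,X)$ under composition with a continuous map inline (correctly, by passing the pointwise limits through $\Phi$), whereas the paper simply cites \cite{ngu01}, Theorem 2.5, p.~13.
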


\begin{proof}
We use Theorem \ref{Tw2} by setting $U=\R$, $\mathcal{F}_1=AA(\R,X)$ and $\mathcal{F}_2=\mathcal{E}(\mathbb{R},X,\mu)$.
\vskip 2 mm
{\bf i)} $AA(\R,X)$ is a closed vector subspace of $BC(\R,X)$ (\cite{ngu01}, Theorem 2.3, p. 11 and Theorem 2.10, p. 16), then a closed vector subspace of $B(\R,X)$. Moreover if $\Phi\in C(X,X)$ and $f\in AA(\R,X)$ then $\Phi\circ f\in AA(\R,X)$ (\cite{ngu01}, Theorem 2.5, p. 13); so Hypothesis i) is satisfied.
\vskip 2 mm
{\bf ii)} is similar to ii) in the proof of Theorem \ref{Tw3}.
\vskip 2 mm
By Theorem \ref{Tw2}, we deduce that $PAA(\mathbb{R},X,\mu)=AA(\mathbb{R},X)+\mathcal{E}(\mathbb{R},X,\mu)$  is a Banach space endowed with the norm $\norm{\cdot}_{\infty}$. 
\end{proof}

\begin{remark} 
In the particular case where $PAA(\mathbb{R},X,\mu)$ is translation invariant, this result is proved in (\cite{26}, Theorem 4.9).
\end{remark}

\begin{remark}
Theorem \ref{Tw4} generalizes Theorem 2.3 in \cite{Zhe} of Zheng and Ding from the space of weighted pseudo almost automorphic functions to the space of $\mu$-pseudo almost  automorphic functions.  
\end{remark}

\section{Spaces of square-mean $\mu$-pseudo almost periodic and automorphic processes}
\label{v}

Throughout this section, we denote by $H$ a Banach space, $(\Omega, \mathcal{A}, P)$ a probability space and $L^2(\Omega, P, H)$ the space of all $H$-valued random variables $x$ with a finite quadratic-mean:
$$E\norm{x}^2 =  \int_{\Omega} \norm{x}^2 \, dP<+\infty .$$
 When $x\in L^2(\Omega, P, H)$, we set $\ds\norm{x}_{L^2} = \left( E\norm{x}^2\right)^{\frac{1}{2}}$. Endowed with the norm $\norm{\cdot}_{L^2}$, $L^2(\Omega, P, H)$ is a Banach space.

\begin{definition}
\cite{Bez-Dia-2}.
Let $x : \R\to L^2(\Omega, P, H)$ be a stochastic process.
\vskip 2 mm
{\bf i)}
 $x$ is said to be \textit{stochastically continuous} if
$$\forall t\in\R , \qquad \lim_{s\to t} E\norm{x(s) - x(t)}^2 = 0 .$$
\vskip 2 mm
{\bf ii)}
$x$  is said to be \textit{stochastically bounded} if
$$\sup_{t\in\R} E\norm{x(t)}^2 <+\infty  .$$
\end{definition}

\begin{remark}
It is easy to verify that a stochastic process $x : \R\to L^2(\Omega, P, H)$ is stochastically continuous and bounded
if and only if
$x$ is a continuous and bounded map from $\R$ to the Banach space $X=L^2(\Omega, P, H)$ in the ordinary sense, i.e. $x\in B(\R,X)$.
\end{remark}

\vskip2mm

The square-mean almost periodic stochastic processes were introduced in the literature by Bezandry and Diagana \cite{Bez-Dia-2}. They established the existence and uniqueness of square-mean almost periodic mild solutions to some stochastic differential equations and some functional integro-differential stochastic evolution equations in \cite{Bez-Dia-2, Bez-Dia-1}. Then Fu and Liu generalize this one to the square-mean almost automorphy in \cite{Fu-Liu}. 
The notion of square-mean $\mu$-pseudo almost periodic and automorphic processes due to 
Diop, Ezzinbi and Mbaye \cite {Di-Ez-Mb-1, Di-Ez-Mb-2}  is a generalization of the pseudo almost automorphic stochastic processes  \cite{Ch-Li-2} and the weighted pseudo almost automorphy \cite{Ch-Li-1} due to Chen and Lin.

The definition of the square-mean $\mu$-pseudo almost automorphy given  in \cite {Di-Ez-Mb-2} is slightly different  from the one given by 
Bedouhene, Challali, Mellah, Raynaud de Fitte and Smaali  in  \cite{BeChMeRaSm}.
A continuous and bounded stochastic process $x : \R\to L^2(\Omega, P, H)$ is 
square-mean $\mu$-ergodic in \cite {Di-Ez-Mb-2} if 
$$\underset{r\rightarrow+\infty}{\lim}\frac{1}{\mu([-r,r])}\int_{[-r,r]}
E\norm{x(t)}^2 \,d\mu(t)=0 
$$
and in \cite{BeChMeRaSm}
 if 
$$\underset{r\rightarrow+\infty}{\lim}\frac{1}{\mu([-r,r])}\int_{[-r,r]}
\left(E\norm{x(t)}^2\right)^{\frac{1}{2}} \,d\mu(t)=0 .
$$
We will see  that these definitions are equivalent (cf. Proposition \ref{Prop1} and \ref{Prop2}).

\subsection{Almost periodic case}

\begin{definition}
\cite{Bez-Dia-2}. A continuous stochastic process $x : \R\to L^2(\Omega, P, H)$ is said to be
 \textit{square-mean almost periodic}  if for all $\varepsilon>0$, there exists
${\ell}>0$, such that for all $\alpha\in\mathbb{R}$, there exists $\tau
\in\lbrack\alpha,\alpha+{\ell}]$ with
\[
\sup_{t\in\mathbb{R}} E \norm{x(t+\tau)-x(t)}^2  <\varepsilon.
\]
\end{definition}

\begin{remark}
\label{Rq1}
It is easy to verify that a stochastic process $x : \R\to L^2(\Omega, P, H)$ is square-mean almost periodic
if and only if
$x$ is an almost periodic function from $\R$ to the Banach space $X=L^2(\Omega, P, H)$ in the Bohr sense, i.e. $x\in AP(\R,X)$ (cf. Definition \ref{Def4}).
\end{remark}

\vskip 2mm

As in Section \ref{ii}, we denote by $\mathcal{B}$ the Lebesgue $\sigma$-field of $\mathbb{R}$ and by
$\mathcal{M}$ the set of all positive measures $\mu$ on $\mathcal{B}$
satisfying $\mu(\mathbb{R})=+\infty$ and $\mu([a,b])<+\infty$, for all $a$,
$b\in\mathbb{R}$ ($a\leq b$).

\begin{definition} \cite{Di-Ez-Mb-2}
Let $\mu\in\mathcal{M}$. Let $x : \R\to L^2(\Omega, P, H)$ be a continuous and bounded stochastic process.
\vskip 2mm
{\bf i)} $x$ is said to be \textit{square-mean $\mu$-ergodic} if 
\[
\underset{r\rightarrow+\infty}{\lim}\frac{1}{\mu([-r,r])}\int_{[-r,r]}
E\norm{x(t)}^2 \,d\mu(t)=0 .
\]
\vskip 2mm
{\bf ii)} $x$ is said to be \textit{square-mean $\mu$-pseudo almost periodic} if 
\[
x = y+z
\]
where $y$ is square-mean almost periodic and $z$ square-mean $\mu$-ergodic.
\end{definition}

\begin{proposition}
\label{Prop1}
Let $\mu\in\mathcal{M}$. Let $x : \R\to L^2(\Omega, P, H)$ be a stochastic process.
\vskip 2mm
{\bf i)} $x$ is square-mean $\mu$-ergodic
if and only if
$x$ is a $\mu$-ergodic function from $\R$ to the Banach space $X=L^2(\Omega, P, H)$ in the sense of Definition \ref{Def1}, i.e. $x\in \mathcal{E}(\mathbb{R}
,X,\mu)$.
\vskip 2mm
{\bf ii)} $x$ is square-mean $\mu$-pseudo almost periodic
if and only if
$x$ is a $\mu$-pseudo almost periodic function from $\R$ to the Banach space $X=L^2(\Omega, P, H)$ in the sense of Definition \ref{Def5}, i.e. $x\in PAP(\mathbb{R}
,X,\mu)$.
 \end{proposition}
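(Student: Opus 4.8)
The plan is to reduce the whole statement to an elementary comparison between the $\mu$-average of a bounded nonnegative function and the $\mu$-average of its square. Both notions appearing in i) presuppose that $x$ is continuous and bounded, so I would first record that, by the remark identifying stochastically continuous and bounded processes with elements of $BC(\R,X)$, we may work with $x\in BC(\R,X)$ throughout. The key observation is that, by the very definition of the norm of $X=L^2(\Omega,P,H)$, one has $E\norm{x(t)}^2=\norm{x(t)}_{L^2}^2$ for every $t$. Hence square-mean $\mu$-ergodicity is exactly the assertion that $\frac{1}{\mu([-r,r])}\int_{[-r,r]}\norm{x(t)}_{L^2}^2\,d\mu(t)\to 0$, whereas $\mu$-ergodicity of $x$ as an $X$-valued function in the sense of Definition \ref{Def1} is the assertion that $\frac{1}{\mu([-r,r])}\int_{[-r,r]}\norm{x(t)}_{L^2}\,d\mu(t)\to 0$. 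Writing $\varphi(t):=\norm{x(t)}_{L^2}$, a bounded nonnegative continuous function, and noting that $\mu([-r,r])>0$ for all large $r$ (since $\mu(\R)=+\infty$ while $\mu([-r,r])<+\infty$), I would introduce the probability measure $\nu_r:=\mu([-r,r])^{-1}\,\mu|_{[-r,r]}$ on $[-r,r]$, so that the two conditions read $\int\varphi^2\,d\nu_r\to 0$ and $\int\varphi\,d\nu_r\to 0$ respectively.

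The two implications of i) then follow from two elementary inequalities for the probability measure $\nu_r$. If $x$ is square-mean $\mu$-ergodic, Jensen's inequality (equivalently Cauchy--Schwarz) gives $\int\varphi\,d\nu_r\leq\left(\int\varphi^2\,d\nu_r\right)^{1/2}$, whose right-hand side tends to $0$; hence $x\in\mathcal{E}(\R,X,\mu)$. Conversely, if $x\in\mathcal{E}(\R,X,\mu)$, then using the bound $0\leq\varphi(t)\leq M:=\norm{x}_{\infty}$ one has the pointwise estimate $\varphi^2\leq M\varphi$, whence $\int\varphi^2\,d\nu_r\leq M\int\varphi\,d\nu_r\to 0$, so $x$ is square-mean $\mu$-ergodic. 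This establishes i).

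For part ii) I would simply transport the decomposition through the equivalences already at hand. If $x$ is square-mean $\mu$-pseudo almost periodic, write $x=y+z$ with $y$ square-mean almost periodic and $z$ square-mean $\mu$-ergodic; by Remark \ref{Rq1} one has $y\in AP(\R,X)$, and by part i) one has $z\in\mathcal{E}(\R,X,\mu)$, so $x\in AP(\R,X)+\mathcal{E}(\R,X,\mu)=PAP(\R,X,\mu)$ by Definition \ref{Def5}. Reading the same chain backwards yields the converse: a decomposition $x=g+\phi$ with $g\in AP(\R,X)$ and $\phi\in\mathcal{E}(\R,X,\mu)$ produces, via Remark \ref{Rq1} and part i), a square-mean almost periodic part and a square-mean $\mu$-ergodic part.

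There is essentially no serious obstacle here; the only point requiring care is that the implication $\mu$-ergodic $\Longrightarrow$ square-mean $\mu$-ergodic genuinely uses the (stochastic) boundedness of $x$ through the estimate $\varphi^2\leq M\varphi$, while the reverse implication is unconditional, resting only on Jensen's inequality for the probability measures $\nu_r$.
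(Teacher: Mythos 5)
Your proof is correct and follows essentially the same route as the paper: the forward implication via Cauchy--Schwarz (your Jensen inequality for the normalized measures $\nu_r$ is exactly the paper's Cauchy--Schwarz estimate divided through by $\mu([-r,r])$), the reverse implication via the pointwise bound $\varphi^2\leq M\varphi$ coming from stochastic boundedness, and part ii) by transporting the decomposition through part i) and the identification of square-mean almost periodicity with $AP(\R,X)$. No substantive difference from the paper's argument.
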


\begin{proof} 
Recall that the Banach space $X=L^2(\Omega, P, H)$ is endowed with the norm $\ds\norm{\cdot}_{L^2}$
 defined by $\ds\norm{x}_{L^2} = \left( E\norm{x}^2\right)^{\frac{1}{2}}$ for $x\in X$.
\vskip 2 mm
{\bf i)} Let $x : \R\to L^2(\Omega, P, H)$ be a stochastic process. It suffices to prove that $x$ is square-mean $\mu$-ergodic if and only if
\begin{equation}
\label{Eq9}
\underset{r\rightarrow+\infty}{\lim}\frac{1}{\mu([-r,r])}\int_{[-r,r]}
\left(E\norm{x(t)}^2\right)^{\frac{1}{2}} \,d\mu(t)=0 .
\end{equation}
First from Cauchy-Schwartz inequality, it follows
\begin{equation*}
\int_{[-r,r]} \norm{x(t)}_{L^2} \,d\mu (t) \leq \left(\int_{[-r,r]} \norm{x(t)}_{L^2}^2 \,d\mu (t)\right)^{\frac{1}{2}} \left(\int_{[-r,r]} 1^2 \,d\mu (t)\right)^{\frac{1}{2}} ,
\end{equation*}
then
$$\frac{1}{\mu([-r,r])}\int_{[-r,r]} \left( E\norm{x(t)}^2\right)^{\frac{1}{2}} \,d\mu (t) \leq \left(\frac{1}{\mu([-r,r])}\int_{[-r,r]}  E\norm{x(t)}^2 \,d\mu (t)\right)^{\frac{1}{2}}  ,$$
consequently if $x$ is square-mean $\mu$-ergodic, then $x$ satisfies \eqref{Eq9}.
\vskip 1 mm
Secondly $x$ is stochastically bounded: there exists $M>0$ such that
$\ds E\norm{x(t)}^2 \leq M$, then
$$E\norm{x(t)}^2 = \norm{x(t)}_{L^2}^2 \leq \norm{x(t)}_{L^2} \sup_{t\in\R}\norm{x(t)}_{L^2} \leq M^{\frac{1}{2}}\left(E\norm{x(t)}^2 \right)^{\frac{1}{2}} ,$$
therefore
\begin{equation}
\label{Eq8}
\frac{1}{\mu([-r,r])}\int_{[-r,r]} E\norm{x(t)}^2 \,d\mu (t) \leq \frac{M^{\frac{1}{2}}}{\mu([-r,r])}\int_{[-r,r]} \left( E\norm{x(t)}^2\right)^{\frac{1}{2}} \,d\mu (t) ,
\end{equation}
consequently if $x$ satisfies \eqref{Eq9}, then $x$ is square-mean $\mu$-ergodic.
\vskip 2 mm
{\bf ii)} results of i) and Remark \ref{Rq1}.
\end{proof}

\begin{remark}
A consequence of Proposition \ref{Prop1} is that all results of \cite{27}  on $\mu$-pseudo almost periodic functions
are directly applicable to square-mean $\mu$-pseudo almost periodic stochastic processes.
\end{remark}

\vskip 2 mm
In view of Proposition \ref{Prop1}, we denote by $PAP(\mathbb{R},L^2(\Omega, P, H),\mu)$ the space of all square-mean $\mu$-pseudo almost periodic stochastic processes $x : \R\to L^2(\Omega, P, H)$. We denote by $\norm{\cdot}_{\infty}$ the norm of $PAP(\mathbb{R},L^2(\Omega, P, H),\mu)$ defined by $\ds\norm{x}_{\infty}=\sup_{t\in\R}\left(E\norm{x(t)}^2\right)^{\frac{1}{2}}$.

\begin{theorem} 
Let $\mu\in\mathcal{M}$. Then $(PAP(\mathbb{R},L^2(\Omega, P, H),\mu),\left\Vert \cdot\right\Vert _{\infty})$ is a Banach space.
\end{theorem}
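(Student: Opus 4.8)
The plan is to reduce this statement directly to Theorem \ref{Tw3} by regarding a square-mean stochastic process as an ordinary function valued in the Banach space $X = L^2(\Omega, P, H)$. The essential identification is already supplied by Proposition \ref{Prop1}: a process $x : \R \to L^2(\Omega, P, H)$ is square-mean $\mu$-pseudo almost periodic if and only if $x \in PAP(\R, X, \mu)$ in the sense of Definition \ref{Def5}. Thus, as a set, $PAP(\R, L^2(\Omega, P, H), \mu)$ coincides with $PAP(\R, X, \mu)$.

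Second, I would verify that the two norms agree. The norm on the process space is $\norm{x}_{\infty} = \sup_{t\in\R}(E\norm{x(t)}^2)^{\frac{1}{2}}$, while the norm on $X$ is $\norm{\cdot}_{L^2}$ with $\norm{x}_{L^2} = (E\norm{x}^2)^{\frac{1}{2}}$. Hence $\norm{x}_{\infty} = \sup_{t\in\R}\norm{x(t)}_{L^2}$, which is precisely the supremum norm of $x$ regarded as an element of $B(\R, X)$. Consequently the normed space $(PAP(\R, L^2(\Omega, P, H), \mu), \norm{\cdot}_{\infty})$ is isometrically identical to $(PAP(\R, X, \mu), \norm{\cdot}_{\infty})$.

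Finally, since $X = L^2(\Omega, P, H)$ is itself a Banach space, Theorem \ref{Tw3} applies to it verbatim and yields that $(PAP(\R, X, \mu), \norm{\cdot}_{\infty})$ is a Banach space; transporting completeness along the isometric identification above gives the claim.

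I expect essentially no obstacle, since all the substantive work has already been carried out in Proposition \ref{Prop1} (the dictionary between the square-mean formulation and the functional formulation) and in Theorem \ref{Tw3} (completeness of $PAP(\R, X, \mu)$ for an arbitrary Banach space $X$). The only point requiring explicit care is to record that $\norm{\cdot}_{\infty}$ for processes is literally the supremum norm on $B(\R, X)$ for the particular choice $X = L^2(\Omega, P, H)$, so that Theorem \ref{Tw3} may be invoked with no modification whatsoever.
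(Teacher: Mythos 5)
Your proposal is correct and follows exactly the paper's own argument: the paper likewise proves this theorem as an immediate consequence of Proposition \ref{Prop1} (identifying square-mean $\mu$-pseudo almost periodic processes with elements of $PAP(\R,X,\mu)$ for $X=L^2(\Omega,P,H)$) together with Theorem \ref{Tw3}. Your additional check that the process norm coincides with the supremum norm on $B(\R,X)$ is a worthwhile explicit detail that the paper leaves implicit.
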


\begin{proof}
It is a consequence of Theorem \ref{Tw3} and Proposition \ref{Prop1}.
\end{proof}

\subsection{Almost automorphic case}

\begin{definition}
\cite{Fu-Liu}. A continuous stochastic process $x : \R\to L^2(\Omega, P, H)$ is said to be
 \textit{square-mean almost automorphic}  if for every sequence of real numbers $(s_{n}^{\prime
})_{n}$ 
there exists a subsequence of the sequence $(s_{n}^{\prime})_{n}$ denoted by $(s_{n})_{n}$ such that
for some stochastic process 
$y : \R\to L^2(\Omega, P, H)$ 
$$\forall t\in\R,\qquad  \lim_{t\to \infty} E\norm{x(t+s_{n}) - y(t)}^2=0$$
and
$$\forall t\in\R,\qquad \lim_{t\to \infty} E\norm{y(t-s_{n}) - x(t)}^2=0 .$$
\end{definition}

\begin{remark}
\label{Rq2}
It is easy to verify that a stochastic process $x : \R\to L^2(\Omega, P, H)$ is square-mean almost automorphic
if and only if
$x$ is an almost automorphic function from $\R$ to the Banach space $X=L^2(\Omega, P, H)$ in the sense of Definition \ref{Def6}, i.e. $x\in AA(\R,X)$. 
\end{remark}

\begin{definition}
\cite{Di-Ez-Mb-2}
\label{Def8}
Let $\mu\in\mathcal{M}$. A continuous and bounded stochastic process $x : \R\to L^2(\Omega, P, H)$
is said to be \textit{square-mean $\mu$-pseudo almost automorphic} if 
\[
x = y+z
\]
where $y$ is square-mean almost automorphic and $z$ square-mean $\mu$-ergodic.
\end{definition}

\vskip 2 mm
From i) of Proposition \ref{Prop1} and Remark \ref{Rq2}, we obtain the following result:

\begin{proposition}
\label{Prop2}
Let $\mu\in\mathcal{M}$. Let $x : \R\to L^2(\Omega, P, H)$ be a stochastic process.
$x$ is square-mean $\mu$-pseudo almost automorphic
if and only if
$x$ is a $\mu$-pseudo almost automorphic function from $\R$ to the Banach space $X=L^2(\Omega, P, H)$ in the sense of Definition \ref{Def7}, i.e. $x\in PAA(\mathbb{R}
,X,\mu)$.
 \end{proposition}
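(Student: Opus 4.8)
The plan is to reduce the claimed equivalence to the two translation results already established, since both Definition~\ref{Def8} and Definition~\ref{Def7} describe $x$ through a decomposition of the identical shape $x = y + z$; only the names attached to the two summands differ. Concretely, I would observe that in both definitions the process $x$ and its would-be summands are, by construction, maps from $\R$ into the Banach space $X = L^2(\Omega, P, H)$, so that no change of ambient space is involved and the two sums may be compared term by term.

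First I would treat the forward implication. Assume $x$ is square-mean $\mu$-pseudo almost automorphic, so that $x = y + z$ with $y$ square-mean almost automorphic and $z$ square-mean $\mu$-ergodic. By Remark~\ref{Rq2}, the process $y$ being square-mean almost automorphic is exactly the statement $y \in AA(\R,X)$, and by assertion i) of Proposition~\ref{Prop1}, the process $z$ being square-mean $\mu$-ergodic is exactly $z \in \mathcal{E}(\R,X,\mu)$. Hence the same decomposition $x = y + z$ exhibits $x$ as a sum of an element of $AA(\R,X)$ and an element of $\mathcal{E}(\R,X,\mu)$, which by Definition~\ref{Def7} means precisely $x \in PAA(\R,X,\mu)$.

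For the converse I would run the identical argument backwards: if $x \in PAA(\R,X,\mu)$, then $x = g + \phi$ with $g \in AA(\R,X)$ and $\phi \in \mathcal{E}(\R,X,\mu)$; applying Remark~\ref{Rq2} and Proposition~\ref{Prop1} i) in the reverse direction identifies $g$ as a square-mean almost automorphic process and $\phi$ as a square-mean $\mu$-ergodic one, so that $x = g + \phi$ realizes the decomposition required by Definition~\ref{Def8}. Since each direction is a literal re-reading of the same sum under the two dictionaries supplied by Remark~\ref{Rq2} and Proposition~\ref{Prop1} i), the equivalence follows.

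I do not expect a genuine obstacle here: all the analytic content --- in particular the passage between square-mean convergence and norm convergence in $L^2(\Omega,P,H)$, and the Cauchy--Schwarz argument relating the two forms of $\mu$-ergodicity --- has already been absorbed into Remark~\ref{Rq2} and Proposition~\ref{Prop1} i). The only point deserving a word of care is that the decomposition must be preserved \emph{exactly}, rather than merely up to some ergodic correction; but this is automatic, since the cited equivalences apply to the very processes $y$ and $z$ (respectively $g$ and $\phi$) appearing in the sum, so no readjustment of the summands is needed.
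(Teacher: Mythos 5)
Your proposal is correct and follows exactly the paper's own route: the paper derives Proposition \ref{Prop2} precisely by combining Remark \ref{Rq2} (square-mean almost automorphy $\Leftrightarrow$ membership in $AA(\R,X)$) with assertion i) of Proposition \ref{Prop1} (square-mean $\mu$-ergodicity $\Leftrightarrow$ membership in $\mathcal{E}(\R,X,\mu)$), applied term by term to the decomposition $x=y+z$. Your write-up simply makes explicit the two directions that the paper leaves to the reader.
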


\begin{remark}
\label{Rq7}
A consequence of Proposition \ref{Prop2} is that all results of \cite{26}  on $\mu$-pseudo almost automorphic functions
are directly applicable to square-mean $\mu$-pseudo almost automorphic stochastic processes.
\end{remark}

\vskip 2 mm
In view of Proposition \ref{Prop2}, we denote by $PAA(\mathbb{R},L^2(\Omega, P, H),\mu)$ the space of all square-mean $\mu$-pseudo almost automorphic stochastic processes $x : \R\to L^2(\Omega, P, H)$. We denote by $\norm{\cdot}_{\infty}$ the norm of $PAA(\mathbb{R},L^2(\Omega, P, H),\mu)$ defined by $\ds\norm{x}_{\infty}=\sup_{t\in\R}\left(E\norm{x(t)}^2\right)^{\frac{1}{2}}$.

\begin{theorem} 
Let $\mu\in\mathcal{M}$. Then $(PAA(\mathbb{R},X,\mu),\left\Vert \cdot\right\Vert _{\infty})$ is a Banach space.
\end{theorem}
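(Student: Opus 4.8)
The plan is to deduce the statement directly from the already-established Theorem \ref{Tw4}, exactly as was done for the square-mean almost periodic case just above. The key observation is that $X := L^2(\Omega, P, H)$ is itself a Banach space under $\norm{x}_{L^2} = \left(E\norm{x}^2\right)^{1/2}$, so that the norm appearing in the statement, namely $\norm{x}_\infty = \sup_{t\in\R}\left(E\norm{x(t)}^2\right)^{1/2}$, coincides term by term with the supremum norm $\sup_{t\in\R}\norm{x(t)}_{L^2}$ that $B(\R,X)$ carries. Thus the normed space in the statement is literally a subspace of $(B(\R,X),\norm{\cdot}_\infty)$ with its inherited norm, which is the setting of Theorem \ref{Tw4}.

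First I would invoke Proposition \ref{Prop2} to identify, as a set, the space of square-mean $\mu$-pseudo almost automorphic processes $x:\R\to L^2(\Omega,P,H)$ with $PAA(\R,X,\mu)$ for $X=L^2(\Omega,P,H)$. Combined with the norm identification above, this shows that the normed space in the statement is exactly $(PAA(\R,X,\mu),\norm{\cdot}_\infty)$. Then I would apply Theorem \ref{Tw4} with this particular choice of Banach space $X$, which asserts precisely that $(PAA(\R,X,\mu),\norm{\cdot}_\infty)$ is a Banach space, and the argument is finished.

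There is essentially no genuine obstacle here, since all the analytic content was carried out earlier: the completeness of the sum $AA(\R,X)+\mathcal{E}(\R,X,\mu)$ was obtained in Theorem \ref{Tw4} through the radial-retraction mechanism of Theorem \ref{Tw2}, and the passage from the square-mean formulation to the $B(\R,X)$ formulation was settled in Proposition \ref{Prop2} together with Remark \ref{Rq2}. The only point deserving a moment's care is the verification that the two norms agree, which is immediate from the definition of $\norm{\cdot}_{L^2}$; once this is noted, the result transfers verbatim.
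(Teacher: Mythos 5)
Your proposal is correct and follows exactly the paper's own argument: the paper proves this theorem in one line, citing Theorem \ref{Tw4} together with Proposition \ref{Prop2}, which is precisely your route of identifying the space of square-mean $\mu$-pseudo almost automorphic processes with $PAA(\R,X,\mu)$ for the Banach space $X=L^2(\Omega,P,H)$ and then applying Theorem \ref{Tw4}. Your additional remark that the norm $\sup_{t\in\R}\left(E\norm{x(t)}^2\right)^{1/2}$ coincides with the supremum norm of $B(\R,X)$ is the (implicit) norm identification the paper relies on, so nothing is missing.
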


\begin{proof}
It is a consequence of Theorem \ref{Tw4} and Proposition \ref{Prop2}.
\end{proof}

\section{Spaces of $\mu$-pseudo almost periodic functions defined on the half-line}
\label{iv}

In this section $X$ is a Banach space. We study the case where the functions are defined only on the half-line.
\vskip2mm

\begin{definition}
$f\in C(\R^+,X)$
(continuous)
  is said to be
 \textit{almost periodic} (in the Bohr sense) if for all $\varepsilon>0$, there exists
${\ell}>0$, such that for all $\alpha \geq 0$, there exists $\tau
\in\lbrack\alpha,\alpha+{\ell}]$ with
\[
\sup_{t\geq 0}\left\|  f(t+\tau)-f(t)\right\|  <\varepsilon.
\]
We denote the space of all such functions by $AP(\mathbb{R}^+,X)$.
\end{definition}

\vskip 2mm

$BC(\mathbb{R}^+,X)$ denotes the
Banach space of all continuous and bounded functions from $\mathbb{R}^+$ to $X$,
equipped with the norm
\[
\left\|  f\right\|  _{\infty}=\sup_{t\geq 0}\left\|  f(t)\right\|  .
\]
One should point out that combining the extension theorem of an almost periodic function on $\R^+$ to an almost periodic function on $\R$ (\cite{Ar-Ba-Hi-Ne}, Proposition 4.7.1, p. 305) and the recurrence property of an almost periodic function on $\R$ which is a consequence of the definition of the almost periodicity:  there exists a sequence of real numbers $(t_n)_{n\in\N}$ such that $\lim_{n\to+\infty} t_n = +\infty$ and $\lim_{n\to+\infty} f(t+t_n) = f(t)$, one can prove that the restriction operator 
\begin{equation}
\label{Eq3}
R: AP(\mathbb{R},X) \to AP(\mathbb{R}^+,X) \quad\text{defined by }R(f)=f_{\mid_{\R^+}}
\end{equation}
is well-defined, maps $AP(\R,X)$ onto $AP(\R^+,X)$ and satisfies for $f\in AP(\R,X)$ 
\[
\norm{f}_{AP(\R,X)} = \sup_{t\in\R}\left\|  f(t)\right\|=\sup_{t\geq 0}\left\|  f(t)\right\| = \norm{R(f)}_{AP(\R^+,X)}  .
\]

\vskip 2mm

Similarly as for $\mu$-pseudo almost periodic functions defined on the whole real line, we define below the $\mu$-pseudo almost periodic functions defined on the half line.

\vskip 1mm

We denote by $\mathcal{B}_+$ the Lebesgue $\sigma$-field of $\mathbb{R}_+$ and by
$\mathcal{M}_+$ the set of all positive measures $\mu$ on $\mathcal{B}_+$
satisfying $\mu(\mathbb{R}^+)=+\infty$ and $\mu([a,b])<+\infty$, for all $0\leq a\leq b$.

\begin{definition}
\label{Def3}
Let $\mu\in\mathcal{M}_+$. $f\in BC(\R^+,X)$
is said to be \textit{$\mu$-ergodic} if 
\[
\underset{r\rightarrow+\infty}{\lim}\frac{1}{\mu([0,r])}\int_{[0,r]}
\left\Vert f(t)\right\Vert \,d\mu(t)=0 .
\]
We denote the space of all such functions by $\mathcal{E}(\mathbb{R}^+
,X,\mu)$.
\end{definition}

\begin{theorem} 
Let $\mu\in\mathcal{M}_+$. Then $(\mathcal{E}(\mathbb{R}^+
,X,\mu),\left\Vert \cdot\right\Vert _{\infty})$ is a Banach space.
\end{theorem}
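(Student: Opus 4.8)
The plan is to realize $\mathcal{E}(\R^+,X,\mu)$ as a closed vector subspace of the Banach space $BC(\R^+,X)$ of continuous bounded functions equipped with $\norm{\cdot}_\infty$, since a closed subspace of a complete space is itself complete. By Definition \ref{Def3} we have $\mathcal{E}(\R^+,X,\mu)\subset BC(\R^+,X)$, so the whole task reduces to two verifications. The vector space structure is immediate: for $f,g\in\mathcal{E}(\R^+,X,\mu)$ and a scalar $\lambda$, the pointwise bound $\norm{(f+\lambda g)(t)}\leq\norm{f(t)}+\abs{\lambda}\,\norm{g(t)}$, once integrated and averaged over $[0,r]$, shows that the ergodic mean of $f+\lambda g$ is again $0$. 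So the subspace property is routine, and the content lies in closedness.

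For closedness I would take a sequence $(f_n)_n$ in $\mathcal{E}(\R^+,X,\mu)$ with $f_n\to f$ in $BC(\R^+,X)$; being a uniform limit of continuous bounded functions, $f$ is continuous and bounded, hence $f\in BC(\R^+,X)$. Fix $\eps>0$ and choose $n$ with $\norm{f-f_n}_\infty<\eps$. The decisive observation is that the normalized averaging functional has total mass one, $\frac{1}{\mu([0,r])}\int_{[0,r]}1\,d\mu(t)=1$, so that for all large $r$ (where $0<\mu([0,r])<+\infty$) the triangle inequality yields
$$\frac{1}{\mu([0,r])}\int_{[0,r]}\norm{f(t)}\,d\mu(t)\leq\norm{f-f_n}_\infty+\frac{1}{\mu([0,r])}\int_{[0,r]}\norm{f_n(t)}\,d\mu(t).$$
Since $f_n$ is $\mu$-ergodic, the last term tends to $0$ as $r\to+\infty$; taking $\limsup_{r\to+\infty}$ therefore bounds the upper ergodic mean of $f$ by $\eps$, and letting $\eps\to0$ forces this limit to be $0$. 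Hence $f\in\mathcal{E}(\R^+,X,\mu)$, the subspace is closed, and the theorem follows.

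I do not expect a genuine obstacle here: this is the standard argument that uniform convergence preserves a ``mean-zero'' condition, and the only point requiring care is that $\mu([0,r])$ is positive and finite for all large $r$ — guaranteed by $\mu(\R^+)=+\infty$ together with $\mu([a,b])<+\infty$ — so that the normalized averages are well defined and the constant $\norm{f-f_n}_\infty$ passes through the averaging cleanly. The argument is exactly parallel to the completeness proof for $\mathcal{E}(\R,X,\mu)$ on the whole line invoked earlier, the half-line restriction $[0,r]$ in place of $[-r,r]$ being the only cosmetic change.
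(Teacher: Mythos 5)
Your proof is correct and is precisely the argument the paper intends: the paper's own ``proof'' merely cites the whole-line case (Proposition 2.11 of Blot--Cieutat--Ezzinbi \cite{27}) and calls the half-line version a slight adaptation, and your closed-subspace-of-$BC(\R^+,X)$ argument is exactly that adaptation. Indeed, the same computation (triangle inequality, normalized averages bounded by $\norm{f-f_n}_\infty$, then $\limsup$) appears essentially verbatim in the paper's proof of the discrete analogue for $\mathcal{E}_d(\Z,X,p)$ in Section 7, so there is nothing to add.
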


\begin{proof}
It is a slight adaptation of the proof of the completeness of the space  of  $\mu$-ergodic functions defined on the whole real line: $(\mathcal{E}(\mathbb{R}
,X,\mu),\left\Vert \cdot\right\Vert _{\infty})$ (\cite{27}, Proposition 2.11).
\end{proof}

\begin{definition}
Let $\mu\in\mathcal{M}_+$. $f\in BC(\R^+,X)$ is said to be \textit{$\mu$-pseudo almost periodic} if 
\[
f = g+\phi \quad \text{ where }g\in AP(\mathbb{R}^+,X) , \quad \phi\in\mathcal{E}(\mathbb{R}^+,X,\mu).
\]
We denote the space of all such functions by $PAP(\mathbb{R}^+,X,\mu)$. 
\end{definition}

\begin{theorem} 
\label{Tw5}
Let $\mu\in\mathcal{M}_+$. Then $(PAP(\mathbb{R}^+,X,\mu),\left\Vert \cdot\right\Vert _{\infty})$ is a Banach space.
\end{theorem}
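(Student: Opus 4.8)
The plan is to apply Theorem \ref{Tw2} with $U=\R^+$, $\mathcal{F}_1=AP(\R^+,X)$ and $\mathcal{F}_2=\mathcal{E}(\R^+,X,\mu)$, so that $PAP(\R^+,X,\mu)=\mathcal{F}_1+\mathcal{F}_2$, in complete analogy with the proof of Theorem \ref{Tw3} for the whole line. It then remains only to verify the two hypotheses of Theorem \ref{Tw2}; the immediately preceding theorem already provides that $\mathcal{E}(\R^+,X,\mu)$ is complete, hence a closed vector subspace of $B(\R^+,X)$.

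To check Hypothesis ii), I would argue exactly as on the whole line: if $g\in AP(\R^+,X)$ and $h\in\mathcal{E}(\R^+,X,\mu)$ satisfy $\norm{g(t)}\leq\norm{h(t)}$ for all $t\geq 0$, then by monotonicity of the integral
\[
\frac{1}{\mu([0,r])}\int_{[0,r]}\norm{g(t)}\,d\mu(t)\leq\frac{1}{\mu([0,r])}\int_{[0,r]}\norm{h(t)}\,d\mu(t),
\]
and letting $r\rightarrow+\infty$ together with Definition \ref{Def3} gives $g\in\mathcal{E}(\R^+,X,\mu)$.

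The substantive point is Hypothesis i): that $AP(\R^+,X)$ is closed in $B(\R^+,X)$ and stable under post-composition with Lipschitz maps. For this I would exploit the restriction operator $R$ from \eqref{Eq3}, which is a surjective isometry of $AP(\R,X)$ onto $AP(\R^+,X)$, hence a bijective isometry with inverse the extension map. Closedness transports directly: since $AP(\R,X)$ is complete and $R$ is an isometric isomorphism, $AP(\R^+,X)$ is complete and therefore closed in $B(\R^+,X)$. For the composition property, given $f\in AP(\R^+,X)$ and a Lipschitz $\Phi:X\to X$, I would pass to the extension $\tilde f:=R^{-1}(f)\in AP(\R,X)$, apply the whole-line composition result (\cite{AmPr}, VII, p.~5, which holds for every continuous $\Phi$) to obtain $\Phi\circ\tilde f\in AP(\R,X)$, and then restrict: since $R(\Phi\circ\tilde f)=\Phi\circ f$, we conclude $\Phi\circ f\in AP(\R^+,X)$.

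With both hypotheses in place, Theorem \ref{Tw2} yields that $\mathcal{F}_1+\mathcal{F}_2=PAP(\R^+,X,\mu)$ is closed, hence a Banach space under $\norm{\cdot}_{\infty}$. I expect the only delicate step to be the composition property of Hypothesis i), since it is not stated directly on the half line; but the extension/restriction machinery of \eqref{Eq3} reduces it to the already-known whole-line statement, so no new estimate is required.
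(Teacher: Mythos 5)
Your proposal is correct and follows essentially the same route as the paper: both apply Theorem \ref{Tw2} with $U=\R^+$, $\mathcal{F}_1=AP(\R^+,X)$, $\mathcal{F}_2=\mathcal{E}(\R^+,X,\mu)$, verify Hypothesis ii) directly from Definition \ref{Def3}, and verify Hypothesis i) via the restriction operator $R$ of \eqref{Eq3} (isometric isomorphism for closedness, and the extend--compose--restrict identity $R(\Phi\circ\tilde f)=\Phi\circ f$ for stability under composition). Your write-up is in fact slightly more explicit than the paper's on the integral-monotonicity step for Hypothesis ii), but the argument is identical in substance.
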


\begin{proof}
We use Theorem \ref{Tw2} by setting $U=\R^+$, $\mathcal{F}_1=AP(\R^+,X)$ and $\mathcal{F}_2=\mathcal{E}(\mathbb{R}^+,X,\mu)$.
\vskip 2 mm
{\bf i)} 
$AP(\R^+,X)$ is isometrically isomorphic to the Banach space $AP(\R,X)$, then it is a Banach space. 
Let $\Phi\in C(X,X)$ and $g\in AP(\R^+,X)$. First there exists $f\in AP(\R,X)$ such that $R(f)=g$ (see \eqref{Eq3}). Secondly by using the definition of the restriction operator $R$ 
we obtain $\Phi \circ g = \Phi \circ [R(f)] = R(\Phi \circ f)$. From the fact that $\Phi\circ f \in AP(\R,X)$ (see proof of Theorem \ref{Tw3}) we deduce that $\Phi\circ g \in AP(\R^+,X)$; so Hypothesis i) is satisfied.
\vskip 2 mm
{\bf ii)} By using Definition \ref{Def3}, we deduce that the pair $AP(\R^+,X)$ and
$\mathcal{E}(\mathbb{R}^+,X,\mu)$ satisfies Hypothesis ii). 
\vskip 2 mm
By Theorem \ref{Tw2}, we obtain that $PAP(\mathbb{R}^+,X,\mu)=AP(\mathbb{R}^+,X) + \mathcal{E}(\mathbb{R}^+,X,\mu)$  is a Banach space endowed with the norm $\norm{\cdot}_{\infty}$. 
\end{proof}

The following definition is due to Maurice Fr\'echet \cite{Fr, Fr1}.

\begin{definition} 
$f\in BC(\R^+,X)$ is said to be \textit{asymptotically almost periodic} if 
\[
f = g+\phi \quad \text{ where }g\in AP(\mathbb{R}^+,X) , \quad  \phi\in C(\R^+,X)  \text{ and } \lim_{t\to + \infty}\phi(t)=0 .
\]
We denote the space of all such functions by $AAP(\mathbb{R}^+,X)$. 
\end{definition}

We will see that the following well-known result is a simple corollary of our main result.

\begin{theorem} 
$(AAP(\mathbb{R}^+,X),\left\Vert \cdot\right\Vert _{\infty})$ is a Banach space.
\end{theorem}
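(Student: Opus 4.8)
The plan is to recognize the space $AAP(\mathbb{R}^+,X)$ of asymptotically almost periodic functions as a sum of two closed subspaces of $B(\mathbb{R}^+,X)$ and to verify the two hypotheses of Theorem \ref{Tw2}. Concretely, I would set $U=\mathbb{R}^+$, take $\mathcal{F}_1 = AP(\mathbb{R}^+,X)$, and take $\mathcal{F}_2 = C_0(\mathbb{R}^+,X) := \{\phi \in C(\mathbb{R}^+,X) : \lim_{t\to+\infty}\phi(t)=0\}$, so that by definition $AAP(\mathbb{R}^+,X) = \mathcal{F}_1 + \mathcal{F}_2$. The goal is then to show both subspaces are closed in $B(\mathbb{R}^+,X)$ under $\norm{\cdot}_\infty$ and that Hypotheses i) and ii) of Theorem \ref{Tw2} hold, from which the completeness of the sum follows immediately.

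First I would record that $\mathcal{F}_1 = AP(\mathbb{R}^+,X)$ is a closed subspace of $B(\mathbb{R}^+,X)$: this is exactly what was used in the proof of Theorem \ref{Tw5}, where $AP(\mathbb{R}^+,X)$ was seen to be isometrically isomorphic (via the restriction operator $R$ of \eqref{Eq3}) to the Banach space $AP(\mathbb{R},X)$. The same reference also gives Hypothesis i) for free: for $\Phi \in C(X,X)$ Lipschitz and $g \in AP(\mathbb{R}^+,X)$, one writes $g = R(f)$ with $f \in AP(\mathbb{R},X)$ and uses $\Phi \circ g = R(\Phi \circ f)$ together with $\Phi \circ f \in AP(\mathbb{R},X)$ to conclude $\Phi \circ g \in AP(\mathbb{R}^+,X)$. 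Thus Hypothesis i) is verbatim the argument already given in Theorem \ref{Tw5}.

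Next I would verify that $\mathcal{F}_2 = C_0(\mathbb{R}^+,X)$ is closed: a uniform limit of continuous functions tending to $0$ at infinity is again continuous and tends to $0$ at infinity (given $\eps>0$, pick $\phi$ in the subspace with $\norm{f-\phi}_\infty < \eps/2$ and then $T$ with $\norm{\phi(t)}<\eps/2$ for $t\geq T$, so $\norm{f(t)}<\eps$ for $t \geq T$), so $\mathcal{F}_2$ is a closed subspace of $B(\mathbb{R}^+,X)$. For Hypothesis ii) I take $g \in \mathcal{F}_1 = AP(\mathbb{R}^+,X)$ and $h \in \mathcal{F}_2 = C_0(\mathbb{R}^+,X)$ with $\norm{g(u)} \leq \norm{h(u)}$ for all $u \in \mathbb{R}^+$, and must conclude $g \in \mathcal{F}_2$, i.e. $\lim_{t\to+\infty} g(t)=0$; but $\norm{g(t)} \leq \norm{h(t)} \to 0$ forces $g(t)\to 0$, and $g$ is already continuous, so $g \in C_0(\mathbb{R}^+,X) = \mathcal{F}_2$. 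With both hypotheses confirmed, Theorem \ref{Tw2} yields that $AAP(\mathbb{R}^+,X) = AP(\mathbb{R}^+,X) + C_0(\mathbb{R}^+,X)$ is a Banach space under $\norm{\cdot}_\infty$.

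The only genuine subtlety, and the step I would flag as the main point to get right, is the definition-matching for $\mathcal{F}_2$: the definition of $AAP(\mathbb{R}^+,X)$ in the paper describes the corrective term as $\phi \in C(\mathbb{R}^+,X)$ with $\lim_{t\to+\infty}\phi(t)=0$, so one must take $\mathcal{F}_2$ to be precisely this space $C_0(\mathbb{R}^+,X)$ rather than, say, an ergodic space, and check that it is closed and satisfies the absorption Hypothesis ii). Everything else is either a direct citation of the argument already carried out for Theorem \ref{Tw5} (for $\mathcal{F}_1$ and Hypothesis i)) or an entirely routine $\eps$-argument, so the proof is short and essentially reduces to assembling these observations and invoking Theorem \ref{Tw2}.
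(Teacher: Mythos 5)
Your proposal is correct and takes essentially the same route as the paper: the same choice $U=\mathbb{R}^+$, $\mathcal{F}_1=AP(\mathbb{R}^+,X)$, $\mathcal{F}_2=C_0(\mathbb{R}^+,X)$, the same invocation of Theorem \ref{Tw2}, and the same citation of part i) of the proof of Theorem \ref{Tw5} for Hypothesis i). The only difference is cosmetic: you write out the closedness of $C_0(\mathbb{R}^+,X)$ and the verification of Hypothesis ii), which the paper simply declares obvious.
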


\begin{proof}
We use Theorem \ref{Tw2} by setting $U=\R^+$, $\mathcal{F}_1=AP(\R^+,X)$ and $\mathcal{F}_2=C_0(\R^+,X)$ where $C_0(\R^+,X) = \set{\phi\in C(\R^+,X)}{\lim_{t\to + \infty}\phi(t)=0}$
\vskip 2 mm
{\bf i)} is proved in i) of the proof of Theorem \ref{Tw5}.
\vskip 2 mm
{\bf ii)} It is obvious. 
\vskip 2 mm
By Theorem \ref{Tw2}, we deduce that $AAP(\mathbb{R}^+,X)=AP(\mathbb{R}^+,X) + C_0(\R^+,X)$  is a Banach space endowed with the norm $\norm{\cdot}_{\infty}$. Note that this sum is direct: $AAP(\mathbb{R}^+,X)=AP(\mathbb{R}^+,X) \bigoplus C_0(\R^+,X)$.

\end{proof}

\section{Spaces of weighted pseudo almost periodic and automorphic sequences}
\label{iii}

In this Section we provide results of completeness on the space of weighted pseudo almost periodic sequences and of the space of weighted pseudo automorphic sequences without any assumption on the weight of the ergodic part of these sequences.

\subsection{Almost periodic case}

The notion of weighted pseudo almost periodicity of sequences 
is a generalization of pseudo almost periodicity of sequences (\cite{Zha2}, Definition 7.11, p. 84). 
The notion of weighted pseudo almost periodicity of sequences is due Zhang and Li \cite{Zh-Li}.

\vskip 2 mm
Denote by $X^{\Z}$ the set of all two-sided sequences  $u=(u_n)_{n\in\Z}$
with values in the Banach space $X$. $\ell^{\infty}(\mathbb{Z},X)$ denotes the
Banach space of all bounded two-sided sequences of $X^{\Z}$
equipped with the norm
\[
\left\|  u\right\|  _{\infty}=\sup_{n\in\mathbb{Z}}\left\|  u_n \right\|  .
\]

\begin{definition}
(\cite{Co}, p. 45).
$u=(u_n)_{n\in\Z}\in X^{\Z}$  is said to be
\textit{almost periodic} if for all $\varepsilon>0$, there exists
$N\in\N^*$, such that for all $m\in\Z$, there exists $p \in\{m, m+1, \cdots, m+N \}$ with
\[
\sup_{n\in\Z}\left\|  u_{n+p}-u_{n}\right\|  <\varepsilon.
\]
We denote the space of all such sequences by $AP_d(\Z,X)$.
\end{definition}

\vskip 2mm

The following well-known statement (\cite{Co}, Theorem 1.27, p. 47) clarifies the relation between almost periodic sequences and functions:  a necessary and sufficient condition for a sequence $u=(u_n)_{n\in\Z}\in AP_d(\Z,X)$ is the existence of $f\in AP(\R,X)$ such that $u_n=f(n)$ for all $n\in\Z$. This permits us to assert that the restriction operator 
\begin{equation}
\label{Eq5}
R: AP(\mathbb{R},X) \to AP_d(\Z,X) \quad\text{defined by }R(f)=f_{\mid_{\Z}}
\end{equation}
is well-defined and maps $AP(\R,X)$ onto $AP_d(\Z,X)$. Let $Q=AP(\R,X)/{\rm Ker} \, R$ and let $\Pi_0: AP(\R,X)\to Q$ be the quotient map, so the surjective linear map $R$ induces  a unique isomorphism 
\begin{equation*}
\tilde{R}: Q \to AP_d(\Z,X) \quad\text{defined by } R=\tilde{R}\circ \Pi_0
\end{equation*}
satisfying for $f\in AP(\R,X)$
\[
\norm{\Pi_0(f)}_{Q} = \inf\set{\sup_{t\in\R}\norm{f(t)+g(t)}}{g\in {\rm Ker} \, R}    = \sup_{n\in\Z}\norm{f(n)} = \norm{\tilde{R}(\Pi_0(f))}_{AP_d(\Z,X)}  ,
\]
therefore $Q=AP(\R,X)/{\rm Ker} \, R$ is isometrically isomorphic to $AP_d(\Z,X)$.

\vskip 2mm

We denote by by
$\mathcal{M}_d$ the set of all two-sided sequences $p=(p_n)_{n\in\Z}\in \R^{\Z}$
satisfying $p_n\geq0$, for all $n\in\Z$ and
$\ds\sum_{n\in\Z}p_n=+\infty$.

\begin{definition}
\label{Def2}
\cite{Ab, Zh-Li}
 Let $p=(p_n)_{n\in\Z}\in\mathcal{M}_d$. $u=(u_n)_{n\in\Z}\in \ell^{\infty}(\mathbb{Z},X)$
is said to be \textit{weighted ergodic} if 
\[
\underset{N\rightarrow+\infty}{\lim}\frac{1}{\sum_{n=-N}^{N}p_n}\sum_{n=-N}^{N} 
\left\Vert u_n\right\Vert p_n =0 .
\]
We denote the space of all such functions by $\mathcal{E}_d(\mathbb{Z},X,p)$.
\end{definition}

\begin{proposition}
Let $p\in\mathcal{M}_d$. Then $(\mathcal{E}_d(\Z
,X,p),\left\Vert \cdot\right\Vert _{\infty})$ is a Banach space.
\end{proposition}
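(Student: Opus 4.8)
The plan is to show that $\mathcal{E}_d(\Z,X,p)$ is a closed vector subspace of the Banach space $(\ell^{\infty}(\Z,X),\norm{\cdot}_{\infty})$, from which the conclusion follows at once, since a closed subspace of a Banach space is itself a Banach space. First I would verify that $\mathcal{E}_d(\Z,X,p)$ is a vector subspace: it plainly contains the zero sequence, and for $u,v\in\mathcal{E}_d(\Z,X,p)$ and scalars $\lambda,\beta$, the triangle inequality $\norm{\lambda u_n+\beta v_n}\leq\abs{\lambda}\norm{u_n}+\abs{\beta}\norm{v_n}$ yields, after multiplying by $p_n$, summing over $-N\leq n\leq N$, and dividing by $\sum_{n=-N}^{N}p_n$, that the weighted average attached to $\lambda u+\beta v$ is dominated by $\abs{\lambda}$ times that of $u$ plus $\abs{\beta}$ times that of $v$, both of which tend to $0$; hence $\lambda u+\beta v\in\mathcal{E}_d(\Z,X,p)$.

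For closedness, I would take a sequence $(u^{(k)})_k$ in $\mathcal{E}_d(\Z,X,p)$ converging to some $u\in\ell^{\infty}(\Z,X)$ for the norm $\norm{\cdot}_{\infty}$, and show $u\in\mathcal{E}_d(\Z,X,p)$. Given $\eps>0$, I fix $k$ with $\norm{u^{(k)}-u}_{\infty}\leq\eps/2$, so that $\norm{u_n}\leq\norm{u_n-u_n^{(k)}}+\norm{u_n^{(k)}}\leq\eps/2+\norm{u_n^{(k)}}$ for every $n$. Multiplying by $p_n$, summing, and dividing by $\sum_{n=-N}^{N}p_n$ (which is positive and tends to $+\infty$ as $N\to+\infty$, since $p\in\mathcal{M}_d$) gives
$$\frac{1}{\sum_{n=-N}^{N}p_n}\sum_{n=-N}^{N}\norm{u_n}p_n\leq\frac{\eps}{2}+\frac{1}{\sum_{n=-N}^{N}p_n}\sum_{n=-N}^{N}\norm{u_n^{(k)}}p_n,$$
where the crucial point is that the weighted average of the constant $\eps/2$ is again $\eps/2$, because the normalized weights $p_n/\sum_{m=-N}^{N}p_m$ sum to $1$.

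Since $u^{(k)}\in\mathcal{E}_d(\Z,X,p)$, the last term tends to $0$ as $N\to+\infty$, hence is $\leq\eps/2$ for $N$ large, giving a weighted average $\leq\eps$ for all large $N$. As $\eps>0$ is arbitrary, the $\limsup$ as $N\to+\infty$ of the weighted average is $0$, that is $u\in\mathcal{E}_d(\Z,X,p)$, proving closedness. This argument is a straightforward transcription to weighted sums of the completeness proof for $\mathcal{E}(\R,X,\mu)$ in \cite{27}, Proposition 2.11, with integrals against $d\mu$ replaced by sums against the weights $p_n$; I do not expect a serious obstacle, the only point deserving care being the exact cancellation of the uniform error term $\eps/2$ through the normalization of the weights, together with the positivity of $\sum_{n=-N}^{N}p_n$ for large $N$, which is guaranteed by the hypothesis $\sum_{n\in\Z}p_n=+\infty$ defining $\mathcal{M}_d$.
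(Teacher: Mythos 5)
Your proof is correct and follows essentially the same route as the paper: show $\mathcal{E}_d(\Z,X,p)$ is closed in $\ell^{\infty}(\Z,X)$ by splitting the weighted average of $\norm{u_n}$, via the triangle inequality, into the uniform error $\norm{u-u^{(k)}}_{\infty}$ (which survives the normalized averaging unchanged) plus the weighted average of the ergodic sequence $u^{(k)}$. The paper phrases the conclusion with a $\limsup$ bound valid for every $k$ rather than your $\eps/2$ bookkeeping, but the two arguments are identical in substance.
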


\begin{proof}
It is enough to prove that $\mathcal{E}_d(\mathbb{Z},X,p)$ is closed in
$\ell^{\infty}(\mathbb{Z},X)$. Let $(u^{n})_{n}$ be a sequence in $\mathcal{E}_d
(\mathbb{Z},X,p)$ such that $\ds\lim_{n\rightarrow+\infty}u^{n}=u$ (cv in $\ell^{\infty}(\mathbb{Z},X)$). Then we have
\[
 \sum_{m=-N}^{N} \norm{u_m} p_m \leq \sum_{m=-N}^{N} \norm{u_m - u_m^n} p_m 
 + \sum_{m=-N}^{N} \norm{u_m^n} p_m
\]
we deduce that
\[
 \frac{1}{\sum_{n=-N}^{N}p_n}\sum_{m=-N}^{N} \norm{u_m} p_m \leq \sup_{m\in\Z}\norm{u_m - u_m^n} + \frac{1}{\sum_{n=-N}^{N}p_n}\sum_{m=-N}^{N} \norm{u_m^n} p_m ,
\]
it follows 
\[
\limsup_{N\to+\infty} \frac{1}{\sum_{n=-N}^{N}p_n}\sum_{m=-N}^{N} \norm{u_m} p_m \leq \sup_{m\in\Z}\norm{u_m - u_m^n} \quad\text{ for all }n\in\mathbb{N}.
  \]
Since $\displaystyle   \lim_{n\to+\infty} \norm{u - u^n}_{\ell^{\infty}(\mathbb{Z},X)} = \lim_{n\to+\infty}\left(\sup_{m\in\Z}\norm{u_m - u_m^n}\right)=0$, we deduce that
\[
\lim_{N\to+\infty} \frac{1}{\sum_{n=-N}^{N}p_n}\sum_{m=-N}^{N} \norm{u_m} p_m=0.
\]
\end{proof}

\begin{definition}
\cite{Zh-Li}
Let $p=(p_n)_{n\in\Z}\in\mathcal{M}_d$. $u=(u_n)_{n\in\Z}\in \ell^{\infty}(\mathbb{Z},X)$ is said to be \textit{weighted pseudo almost periodic} if 
\[
u = v+w \quad \text{ where }v\in AP_d(\mathbb{Z},X) , \quad w\in\mathcal{E}_d(\mathbb{Z},X,p).
\]
We denote the space of all such functions by $PAP_d(\mathbb{Z},X,p)$. 
\end{definition}

\begin{remark} 
Contrarily to the pseudo almost periodic case: $p_n=1$ for all $n\in\Z$, in general  the sum is not direct for the weighted pseudo almost periodic case. For example if $p=(p_n)_{n\in\Z}$ is defined by $p_{2n}=0$, $p_{2n+1}=1$ and $u=(u_n)_{n\in\Z}$ by $u_{2n}=1$, $u_{2n+1}=0$ for all $n\in\Z$, then $u\in AP_d(\mathbb{Z},\R)\cap\mathcal{E}_d(\mathbb{Z},\R,p)$.
\end{remark} 

\begin{theorem} 
\label{Tw6}
Let $p\in\mathcal{M}_d$. Then $(PAP_d(\mathbb{Z},X,p),\left\Vert \cdot\right\Vert _{\infty})$ is a Banach space.
\end{theorem}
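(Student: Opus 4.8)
The plan is to invoke Theorem \ref{Tw2} with $U=\Z$, $\mathcal{F}_1 = AP_d(\Z,X)$ and $\mathcal{F}_2 = \mathcal{E}_d(\Z,X,p)$, following the pattern of the proofs of Theorems \ref{Tw3}, \ref{Tw4} and \ref{Tw5}. First I would observe that $B(\Z,X) = \ell^{\infty}(\Z,X)$, since a function on $\Z$ is bounded precisely when the associated two-sided sequence is bounded, so the ambient Banach space is the correct one. Both subspaces are closed in $B(\Z,X)$: the space $\mathcal{E}_d(\Z,X,p)$ is a Banach space for $\norm{\cdot}_{\infty}$ by the Proposition preceding this theorem, hence a closed subspace; and $AP_d(\Z,X)$ is a Banach space because it is isometrically isomorphic to the quotient $Q = AP(\R,X)/{\rm Ker}\, R$ of a Banach space, through the restriction operator $R$ of \eqref{Eq5}.

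To check Hypothesis i), I would let $\Phi : X \to X$ be Lipschitz (continuity already suffices) and $u \in AP_d(\Z,X)$, and lift $u$ to the line. Since $R$ maps $AP(\R,X)$ onto $AP_d(\Z,X)$, there exists $f \in AP(\R,X)$ with $R(f) = u$, i.e. $u_n = f(n)$ for all $n \in \Z$. Then $\Phi \circ u = (\Phi(f(n)))_{n\in\Z} = R(\Phi \circ f)$. Because $\Phi \circ f \in AP(\R,X)$ (the composition property of almost periodic functions already used in the proof of Theorem \ref{Tw3}), it follows that $\Phi \circ u = R(\Phi \circ f) \in AP_d(\Z,X)$; so Hypothesis i) holds.

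For Hypothesis ii), I would take $g \in AP_d(\Z,X)$ and $h \in \mathcal{E}_d(\Z,X,p)$ with $\norm{g_n} \leq \norm{h_n}$ for all $n \in \Z$. The pointwise bound is inherited by the weighted averages of Definition \ref{Def2}:
\[
\frac{1}{\sum_{m=-N}^{N} p_m} \sum_{m=-N}^{N} \norm{g_m}\, p_m \leq \frac{1}{\sum_{m=-N}^{N} p_m} \sum_{m=-N}^{N} \norm{h_m}\, p_m \to 0 \quad (N\to+\infty),
\]
so $g \in \mathcal{E}_d(\Z,X,p)$ and Hypothesis ii) is satisfied (exactly as in part ii) of the proof of Theorem \ref{Tw3}). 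Theorem \ref{Tw2} then yields that $PAP_d(\Z,X,p) = AP_d(\Z,X) + \mathcal{E}_d(\Z,X,p)$ is a Banach space for $\norm{\cdot}_{\infty}$, which is the claim.

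The only genuinely non-routine point is Hypothesis i): the stability of $AP_d(\Z,X)$ under post-composition by a Lipschitz map. There is no direct composition theorem for almost periodic \emph{sequences} available to quote, so the step carrying the argument is the lifting through the surjective restriction operator $R$, which reduces the discrete statement to the already-established composition property on $AP(\R,X)$.
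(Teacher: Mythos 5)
Your proof is correct and follows essentially the same route as the paper's own proof: both verify Hypothesis i) by lifting $u\in AP_d(\Z,X)$ to $f\in AP(\R,X)$ through the surjective restriction operator $R$ of \eqref{Eq5} and invoking the composition property on the line, and both verify Hypothesis ii) directly from Definition \ref{Def2} before applying Theorem \ref{Tw2}. Nothing is missing; your explicit write-out of the weighted-average inequality in Hypothesis ii) is just a slightly more detailed version of what the paper leaves implicit.
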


\begin{proof}
We use Theorem \ref{Tw2} by setting $U=\Z$, $\mathcal{F}_1=AP_d(\Z,X)$ and $\mathcal{F}_2=\mathcal{E}_d(\mathbb{Z},X,p)$. Remark that the Banach space $B(\Z,X)$ used in Theorem \ref{Tw2} is $\ell^{\infty}(\mathbb{Z},X)$.
\vskip 2 mm
{\bf i)} $AP_d(\Z,X)$ is isometrically isomorphic to the Banach space $AP(\R,X)/{\rm Ker} \, R$, then it is a Banach space. 
Let $\Phi\in C(X,X)$ and $u = (u_n)_{n\in\Z}\in AP_d(\Z,X)$. 
First there exists $f\in AP(\R,X)$ such that $R(f)=u$ (see \eqref{Eq5}). 
Secondly by using the definition of the restriction operator $R$ we obtain $\Phi \circ u = \Phi \circ [R(f)] = R(\Phi \circ f)$. 
From the fact that $\Phi\circ f \in AP(\R,X)$ (see proof of Theorem \ref{Tw3}) we deduce that $\Phi\circ u \in AP_d(\Z,X)$; so Hypothesis i) is satisfied.
\vskip 2 mm
{\bf ii)} By using Definition \ref{Def2}, we deduce that the pair $AP_d(\Z,X)$ and
$\mathcal{E}_d(\mathbb{Z},X,p)$ satisfies Hypothesis ii).
\vskip 2 mm
By Theorem \ref{Tw2}, we obtain that $PAP_d(\mathbb{Z},X,p)=AP_d(\mathbb{Z},X) + \mathcal{E}_d(\mathbb{Z},X,p)$  is a Banach space endowed with the norm $\norm{\cdot}_{\infty}$. 
\end{proof}

\subsection{Almost automorphic case}

\begin{definition}
\cite{Ara}.
 $u=(u_n)_{n\in\Z}\in X^{\Z}$  is said to be \textit{almost automorphic} if for every sequence of integer numbers $(k_{n}^{\prime
})_{n\in\N}\in \Z^\N$ 
there exists a subsequence of $(s_{n}^{\prime})_{n}$ denoted by $(s_{n})_{n}$ such that

\[
v_p=\underset{n\rightarrow\infty}{\lim}u_{p+k_{n}}\text{ exists for all
}p\text{ in }\mathbb{Z}
\]
and
\[
\underset{n\rightarrow\infty}{\lim}v_{p-k_{n}}=u_p\text{ for all }p\text{ in
}\mathbb{Z}\text{.}
\]
We denote the space of all such functions by $AA_d(\mathbb{Z},X)$.

\end{definition}

The following definition is due to Abbas  in  \cite{Ab} but it is not proved that this space is closed in $\ell^{\infty}(\mathbb{Z},X)$.

\begin{definition}
Let $p=(p_n)_{n\in\Z}\in\mathcal{M}_d$. $u=(u_n)_{n\in\Z}\in \ell^{\infty}(\mathbb{Z},X)$ is said to be \textit{weighted pseudo almost automorphic} if 
\[
u = v+w \quad \text{ where }v\in AA_d(\mathbb{Z},X) , \quad w\in\mathcal{E}_d(\mathbb{Z},X,p).
\]
We denote the space of all such functions by $PAA_d(\mathbb{Z},X,p)$. 
\end{definition}

\begin{theorem} 
Let $p\in\mathcal{M}_d$. Then $(PAA_d(\mathbb{Z},X,p),\left\Vert \cdot\right\Vert _{\infty})$ is a Banach space.
\end{theorem}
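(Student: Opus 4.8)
The plan is to follow exactly the template established by the proofs of Theorems \ref{Tw3}, \ref{Tw4}, and \ref{Tw6}, applying the general completeness result Theorem \ref{Tw2} with $U=\Z$, $\mathcal{F}_1=AA_d(\Z,X)$ and $\mathcal{F}_2=\mathcal{E}_d(\Z,X,p)$, and noting that the ambient space $B(\Z,X)$ is precisely $\ell^{\infty}(\Z,X)$. The decomposition $PAA_d(\Z,X,p)=AA_d(\Z,X)+\mathcal{E}_d(\Z,X,p)$ means completeness follows at once once the two hypotheses of Theorem \ref{Tw2} are verified.

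First I would check Hypothesis ii), which is the easy half: for sequences $v\in AA_d(\Z,X)$ and $w\in\mathcal{E}_d(\Z,X,p)$ with $\norm{v_n}\leq\norm{w_n}$ for all $n\in\Z$, the defining limit in Definition \ref{Def2} for $w$ dominates the corresponding expression for $v$ termwise, so $v\in\mathcal{E}_d(\Z,X,p)$. This is verbatim the argument used for $\mathcal{E}(\R,X,\mu)$ in the proofs above.

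The main work is Hypothesis i), namely that $AA_d(\Z,X)$ is a closed subspace of $\ell^{\infty}(\Z,X)$ and is stable under post-composition by continuous maps $\Phi\in C(X,X)$. For stability under $\Phi$, I would argue directly from the sequence definition: given $u\in AA_d(\Z,X)$ and an arbitrary integer sequence $(k_n')_n$, extract the subsequence $(k_n)_n$ along which $v_p=\lim_n u_{p+k_n}$ exists pointwise and $\lim_n v_{p-k_n}=u_p$; then by continuity of $\Phi$ the sequence $\Phi\circ u$ admits the pointwise limit $\Phi(v_p)$ along the same $(k_n)_n$, and $\lim_n \Phi(v_{p-k_n})=\Phi(u_p)$, so $\Phi\circ u\in AA_d(\Z,X)$. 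Closedness of $AA_d(\Z,X)$ in $\ell^{\infty}(\Z,X)$ is a standard discrete analogue of the fact cited for $AA(\R,X)$; it follows by a uniform-limit plus diagonal-extraction argument, and in any case can be transported from the continuous setting via the interpolation $u_n=f(n)$, since every discrete almost automorphic sequence arises as the restriction of an $f\in AA(\R,X)$ (mirroring the almost periodic correspondence in \eqref{Eq5}).

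The step I expect to be the genuine obstacle is the closedness of $AA_d(\Z,X)$: unlike the almost periodic case, there is no clean isometric identification with a quotient of $AA(\R,X)$ readily available in the excerpt, so one must either supply the diagonal-sequence argument for uniform limits of almost automorphic sequences directly or justify carefully that the discrete restriction operator behaves as in \eqref{Eq5}. Once this closedness and the $\Phi$-stability are in hand, the final invocation of Theorem \ref{Tw2} yields that $PAA_d(\Z,X,p)=AA_d(\Z,X)+\mathcal{E}_d(\Z,X,p)$ is a Banach space under $\norm{\cdot}_{\infty}$, completing the proof.
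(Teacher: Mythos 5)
Your proposal is correct and takes essentially the same route as the paper: the paper likewise invokes Theorem \ref{Tw2} with $U=\Z$, $\mathcal{F}_1=AA_d(\Z,X)$, $\mathcal{F}_2=\mathcal{E}_d(\Z,X,p)$, verifies Hypothesis ii) exactly as in Theorem \ref{Tw6}, and settles Hypothesis i) by asserting the closedness of $AA_d(\Z,X)$ in $\ell^{\infty}(\Z,X)$ and citing \cite{Ara} for stability under composition with continuous maps. The details you supply --- the pointwise-limit argument for $\Phi$-stability and the diagonal-extraction proof of closedness --- are precisely what the paper's citation and unproved assertion stand in for, so your argument does not diverge from the paper's (and is, if anything, more complete on the step you correctly identify as the only real obstacle).
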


\begin{proof}
We use Theorem \ref{Tw2} by setting $U=\Z$, $\mathcal{F}_1=AA_d(\Z,X)$ and $\mathcal{F}_2=\mathcal{E}_d(\mathbb{Z},X,p)$. \vskip 2 mm
{\bf i)} $AA_d(\Z,X)$ is a closed vector subspace of $\ell^{\infty}(\mathbb{Z},X)$. If $\Phi\in C(X,X)$ and $v\in AA_d(\Z,X)$ then $\Phi\circ v\in AA(\Z,X)$ \cite{Ara}; so Hypothesis i) is satisfied.
\vskip 2 mm
{\bf ii)}  is similar to ii) in the proof of Theorem \ref{Tw6}.
\vskip 2 mm
By Theorem \ref{Tw2}, we obtain that $PAA_d(\mathbb{Z},X,p)=AA_d(\mathbb{Z},X) + \mathcal{E}_d(\mathbb{Z},X,p)$  is a Banach space endowed with the norm $\norm{\cdot}_{\infty}$. 
\end{proof}


\end{document}